\newcommand{\commentSabine}[1]{}
\newcommand{\blue}[1]{\textcolor{blue}{#1}}
\newcommand{\FDE}{\textsf{FDE}}
\newcommand{\BD}{\textsf{BD}}
\newcommand{\f}{\varphi}
\newcommand{\p}{\psi}
\newcommand{\Luk}{{\mathchoice{\mbox{\rm\L}}{\mbox{\rm\L}}{\mbox{\rm\scriptsize\L}}{\mbox{\rm\tiny\L}}}}
\newcommand{\weakrightarrow}{\rightarrowtriangle}
\newtheorem{convention}{Convention}
\begin{document}

\setlength{\jot}{0pt} 
\setlength{\abovedisplayskip}{2pt}
\setlength{\belowdisplayskip}{2pt}
\setlength{\abovedisplayshortskip}{1pt}
\setlength{\belowdisplayshortskip}{1pt}

\title{Constraint tableaux for two-dimensional fuzzy logics\thanks{The research of Marta B\'ilkov\'a was supported by RVO: 67985807. The research of Sabine Frittella and Daniil Kozhemiachenko was funded by the grant ANR JCJC 2019, project PRELAP (ANR-19-CE48-0006).}}
%
%
\author{
Marta B\'ilkov\'a\inst{1}\orcidID{0000-0002-3490-2083} \and
Sabine Frittella\inst{2}\orcidID{0000-0003-4736-8614}
\and
Daniil Kozhemiachenko\inst{2}\orcidID{0000-0002-1533-8034}}
\authorrunning{B\'ilkov\'a et al}
%
\institute{The Czech Academy of Sciences, Institute of Computer Science, Prague\\
\email{bilkova@cs.cas.cz}
\and
INSA Centre Val de Loire, Univ.\ Orl\'{e}ans, LIFO EA 4022, France\\
\email{sabine.frittella@insa-cvl.fr, daniil.kozhemiachenko@insa-cvl.fr}}
\maketitle              
\begin{abstract}
We introduce two-dimensional logics based on \L{}ukasiewicz  and G\"{o}del logics to formalize 
reasoning with graded, incomplete and inconsistent information.
The logics are interpreted on matrices, where the common underlying structure is the bi-lattice (twisted) product of the $[0,1]$ interval. The first (resp.\ second) coordinate encodes the positive (resp.\ negative) information one has about a statement. We propose constraint tableaux that provide a modular framework to address their completeness and complexity.

\keywords{Constraint tableaux  \and \L{}ukasiewicz logic \and G\"{o}del logic \and Two-dimensional logics.}
\end{abstract}
\section{Introduction}

\paragraph{A two-dimensional treatment of uncertainty.}
Belnap-Dunn four-valued logic $\BD$ \cite{Belnap19,dunn76,OmoriWansing2017}, also referred to as First Degree Entailment $\FDE$, provides a logical framework to reason with both incomplete and inconsistent information. In $\BD$, formulas are evaluated on the Belnap-Dunn square 
(Figure~\ref{fig:square:with:filter}, left) where the four 
values encode the information available about the formula:  $\{t,f,b,n\}$ (true, false, both, neither). Hence, $b$ and $n$ correspond to inconsistent and incomplete information respectively. 
The shift in perspective lies in 
the values encoding the information available about the formula, and not 
the intrinsic truth or falsity of the formula which may 
not be accessible.
This idea was generalized by introducing the algebraic notion of bilattices by Ginsberg \cite{ginsberg88} in the context of AI, and studied further in \cite{R2010PhD,JR2012}. Bilattices contain two lattice orders simultaneously: a truth order, and 
an information order. 
Belnap-Dunn square, the smallest interlaced bilattice, can be seen as the product bilattice of the two-element lattice 
where the four values are seen as pairs of classical values which can be naturally interpreted as representing two independent dimensions of information – the positive and the negative one. We can understand them as providing positive and negative support for statements independently. 

Non-standard probabilities \cite{dunn2010,KMR21} extend the idea of independent positive and negative support of a statement in presence of uncertainty. They 
quantify evidence for and evidence against 
(the positive and negative probabilistic information about) a statement $\f$ with a couple $p(\f)=(p^+(\f),p^-(\f)) \in [0,1]\times [0,1]$. The maps are such that $p^-(\f)=p^+(\neg\f)$, 
$p^+$ is a monotone map w.r.t.\ \BD\ entailment relation, and satisfies the import-export axiom $p^+ (\f \wedge \p) + p^+ (\f \vee \p) = p^+ (\f) + p^+ (\p)$.
Since formulas are interpreted in $\BD$, one cannot prove that $p(\neg\f)=1 - p(\f)$, and $p(\f)=p(\neg\f)=1$ can be the case when one has contradictory information about $\f$. 
The range of non-standard probabilities coincides with the carrier of 
the continuous extension of Belnap-Dunn square (Figure~\ref{fig:square:with:filter}, center), which we see as the pro\-duct bilattice of the unit real interval $[0,1]\odot [0,1]$ in \autoref{ssec:prelim}.\footnote{In the context of Nelson's paraconsistent logics such product construction has been called twisted product of algebras \cite{Vakarelov1977}, or twist structures~\cite[Chapter~8]{Odintsov2008}.} We employ expansions of this algebra in \autoref{ssec:def:logics} to provide semantics to two-dimensional fuzzy logics.\footnote{We wish to stress we do not claim that non-standard probabilities are compositional or propose an algebraic interpretation of them.}
\vspace*{-5pt}
\paragraph{A broader motivation.}
This paper is a part of the project introduced in \cite{BilkovaFrittellaMajerNazari2020} aiming to de\-ve\-lop a modular logical framework for reasoning based on uncertain, incomplete and inconsistent information.
We model agents who build their epistemic attitudes (like beliefs) based on 
information aggregated from multiple sources. 
A convenient framework to formalize such 
reasoning is that of two-layer modal logics, first introduced in \cite{faginHalpernMegido1990,Hajek1998} and further developed in \cite{CintulaN14,baldietal2020}. Roughly speaking, the lower layer of events or evidence encodes the information given by the sources, while the upper layer encodes reasoning with the agent's attitudes based on this information, and the modalities expressing the attitudes connect the two layers and are interpreted in terms of an uncertainty measure (like probability, belief function, etc.).
In this article, we study two families of logics suitable for the upper layer. 

\vspace*{-5pt}
\paragraph{The logics.} 
We aim at a two-dimensional formalism that separates the positive and negative dimensions of information or support not only on the level of evidence, but also on the level of reasoning with agent's epistemic attitudes. In \cite{BilkovaFrittellaMajerNazari2020}, we have proposed examples of such
two-layer modal logics of belief based on incomplete and inconsistent information.
%
In the two-layer framework, the upper logic operates atomic propositions of the form $B\phi$ where $\phi$ is a formula of the lower layer (and the belief $B$ modalities do not nest). 
Atomic propositions of the logics we propose here can therefore be given such an epistemic interpretation, depending on a choice of epistemic attitudes and the uncertainty measure used to quantify evidence for and evidence against a statement. The logics themselves then model graded reasoning with such
epistemic attitudes.\footnote{This is a natural point to enter the discussion  whether reasoning about uncertainty can be adequately handled within truth-functional semantics (see e.g. \cite{DD08}).
Such discussion is however beyond the scope of the current paper.}

%
%
In some scenarios, it is reaso\-na\-ble to represent agents attitudes as probabilities (e.g.\ a company reasoning with information based on statistical data). To model graded reasoning about such attitudes, we propose logics derived from \L{}ukasiewicz logic \cite[Chapter VI]{FuzzyHB2}, mainly
because its language allows to express the (non-standard) probability axioms which is crucial to obtain complete axiomatization of the resulting two-layer logics \cite{Hajek1998,BilkovaFrittellaMajerNazari2020}. 
%

In other cases, the agent's aggregated attitude is not a probability. For instance, agents may be able to compare their belief on two different statements while not being necessarily able to say exactly to what extent they believe. Just as \L{}ukasiewicz logic can be seen as a logic of measure or quantity, G\"{o}del logic \cite[Chapter VII]{FuzzyHB2} can be considered a logic of order. 
In this context it is therefore natural to consider G\"odel logic as the starting point.

To comply with the two-dimensionality aim, we  define the logics semantically, using expansions of the pro\-duct bilattice $[0,1]\odot [0,1]$ with connectives derived from standard semantics of \L{}ukasiewicz logic or G\"{o}del logic. Two-dimensional treatment of implication is of a particular interest (as we explain more in detail in Remarks~\ref{rem:coimplication} and \ref{rem:negandimpl}). We consider two possibilities: the first dualizes implication by co-implication, 
the second understands negative support of an implication as a conjunction of the positive support of the antecedent and the negative support of the consequent. The first option connects to one of Wansing's logic of \cite{Wansing2008}, namely $I_4C_4$, and goes back to bi-intuitionistic logic \cite{Go00,Raus80}, the second option connects to Nelson's logic $N4$ \cite{Nelson1949}.

Depending on the choice of connectives, and the choice of the set of designated values on the resulting algebra, we encounter both logics which are paraconsistent and logics which are not. 
%
%
Before proceeding further, we need to clarify the notion of paraconsistency. Unless specified otherwise, we construe ‘logic’ as a set of valid formulas, not as sets of valid entailments. Hence, while not all logics considered in the paper lack explosion w.r.t. their entailment --- $p,\neg p\vDash q$, in none of them  $(p\wedge\neg p)\rightarrow q$ is valid. It is in this sense that we call the logics presented here ‘paraconsistent’.
\vspace*{-5pt}
\paragraph{Proof theory.}
Proof theory for many-valued logics is mostly presented in either of the following three forms. Hilbert style axiomatic calculi (cf., e.g.~\cite{Hajek1998,MetcalfeOlivettiGabbay2008}); different versions of sequent and hypersequent calculi~\cite{Haehnle2001HBPL,MetcalfeOlivettiGabbay2008}; tableaux and decomposition calculi (cf., e.g.~\cite{Haehnle2001HBAR}
for \L{}ukasiewicz logic and~\cite{AvronKonikowska2001} for G\"{o}del logic).

Each of these proof formalisms has its own advantages: Hilbert calculi provide an explicit list of postulates which facilitates establishing the relations between different logics (e.g. whether one logic is an extension of another). The rules of (hyper)sequent calculi provide structural insights into the algebraic properties of the connectives of the given logic. 
On the other hand, tableaux and decomposition systems are easily automatisable and can be readily used to determine an upper bound on the complexity of the validity and satisfiability problems for the logic in question. 
Another advantage of the tableaux is that their semantical nature allows for a straightforward formalisation of different entailment relations defined on the same algebra. Since the logics we are going to introduce are hybrids  between $\FDE$ and \L{}ukasiewicz or G\"{o}del logic, we opt for combining the constraint tableaux framework with the
\FDE-tableaux by D’Agostino \cite{DAgostino1990}.
\vspace*{-5pt}
\paragraph{Structure of the paper.}
Section \ref{sec:logics} presents preliminaries on bilattices and matrices and introduces the logics for a two dimensional treatment of uncertainty and their properties (proofs are in the Appendix).  Section \ref{sec:tableaux} presents the constraint tableaux for these logics and discusses their soundness and completeness, and the complexity of the proof search.  Section \ref{sec:conclusion} presents further lines of research. 
\section{The logics for a two-dimensional treatment of uncertainty} 
\label{sec:logics}
\subsection{Preliminaries}
\label{ssec:prelim}
First, we describe the algebras we are going to use to interpret the logics. Their construction relays on the standard MV-algebra, and the standard G\"{o}del algebra, which provide the standard semantics of \L{}ukasiewicz and G\"{o}del logic respectively (we refer the reader to \cite[Chapters~VI,VII]{FuzzyHB2} for a basic exposure to G\"{o}del and \L{}ukasiewicz logics and their standard semantics). 
In what follows, $[0,1]$ denotes the real unit interval with its natural order, and $[0,1]^\mathsf{op}$ denotes the interval with the reversed order.  
\paragraph{The standard MV-algebra}  $[0,1]_{\Luk}=([0,1],0,\wedge,\vee,\&,\rightarrow_{\Luk})$ is defined as follows: for all $a,b\in[0,1]$ the standard operations are given by
\begin{align*}
a\wedge b &\coloneqq\min(a,b) &
a\& b &\coloneqq\max(0, a+b-1)\\
a\vee b &\coloneqq\max(a,b) &
a\rightarrow_{\Luk}b &\coloneqq \min(1,1-a+b)
\end{align*}
Moreover, we define the negation ${\sim_{\Luk}}a\coloneqq a\rightarrow_{\Luk}0$, the constant $1\coloneqq\sim_{\Luk} 0$, the truncated sum
$a\oplus b\coloneqq \sim_{\Luk} a \rightarrow_{\Luk} b$, and the truncated subtraction $a\ominus b\coloneqq a\&{\sim}b$.

The MV-algebra $[0,1]^\mathsf{op}_{\Luk}= ([0,1]^\mathsf{op},1, \vee, \wedge, \oplus, \ominus)$ arises turning the standard MV-algebra upside down, and is isomorphic to it. Here, we have ${\sim_{\Luk}}a \coloneqq 1 \ominus a$.
\paragraph{The standard G\"{o}del algebra} $[0,1]_{\mathsf{G}}=([0,1],0,\wedge,\vee,\rightarrow_{\mathsf{G}})$ is defined as follows: for all $a,b\in[0,1]$, the standard operations are given by $a\wedge b \coloneqq \min(a,b), a\vee b \coloneqq \max(a,b)$, and the implication is defined as follows. We at the same time spell out a definition of a co-implication we shall need later on: 
\begin{equation*}
 a\rightarrow_{G} b=
\begin{cases}
1, \ \text{if}\  a\leq b\\
b \ \ \text{else}
\end{cases}
\ \ \ \qquad
b\Yleft_{G} a=
\begin{cases}
0, \ \text{if}\  b\leq a\\
b \ \ \text{else}
\end{cases}
\end{equation*}
We define a negation ${\sim_{\mathsf{G}}}a\coloneqq a\rightarrow_{\mathsf{G}}0$, and $1\coloneqq \sim_{\mathsf{G}}0$. 

The algebra $[0,1]^\mathsf{op}_{\mathsf{G}}=([0,1]^\mathsf{op},1,\vee,\wedge,\Yleft_{\mathsf{G}})$ arises by dualizing the standard G\"{o}del algebra (in particular, similarly as $\rightarrow_{G}$ is the residuum of $\wedge$, $\Yleft_{\mathsf{G}}$ is the residuum of $\vee$). A negation can be defined on this algebra as $-_{\mathsf{G}} a \coloneqq 1\Yleft_{\mathsf{G}} a$.
\begin{remark}\label{rem:coimplication}
Observe that $\ominus$ and $\Yleft_G$ are dual to $\rightarrow_{\L}$ and $\rightarrow_G$ in the following sense.
\begin{align*}
a\leq b\oplus c&\text{ iff }a\ominus b\leq c&a\&b\leq c&\text{ iff }a\leq b\rightarrow_{\L}c\\
a\leq b\vee c&\text{ iff }a\Yleft_G b\leq c&a\wedge b\leq c&\text{ iff }a\leq b\rightarrow_{G}c
\end{align*}
As one can see, $\ominus$ and $\Yleft_G$ residuate disjunctions dually to how $\rightarrow_{\L}$ and $\rightarrow_G$ residuate conjuctions. Taking these dualities into account, we will call $\ominus$ and $\Yleft_G$ \emph{co-implications}.
\end{remark}
\paragraph{Product billatices} Given an arbitrary lattice
$\mathbf{L}=(L,\wedge_L,\vee_L)$, 
we can construct the \emph{product bilattice} $\mathbf{L} \odot \mathbf{L}=(L \times L , \wedge, \vee, \sqcap, \sqcup, \neg)$ \cite{Avron96,AvronArieli1996}. In what follows, we  essentially use the product bilattice $[0,1]\odot [0,1]$, constructed from the lattice $([0,1], \min, \max)$.
We  only consider the $\{\wedge, \vee,\neg\}$ reduct of this structure in this paper, and not to complicate notation denote it by $[0,1]\odot [0,1]$. It is defined as follows:
for all $(a_1, a_2), (b_1,b_2) \in [0,1] \times [0,1]$, 
\begin{align*}
(a_1, a_2)\leq (b_1,b_2) &\coloneqq a_1\leq b_1 \text{ and } b_2\leq a_2\\
 \neg (a_1,a_2) &\coloneqq (a_2,a_1) \\
(a_1,a_2) \wedge (b_1,b_2) &\coloneqq (\min (a_1,b_1), \max (a_2,b_2))
\\ 
(a_1,a_2) \vee (b_1,b_2) &\coloneqq (\max (a_1 ,b_1), \min (a_2,b_2)).
\end{align*}
We  use expansions of $[0,1]\odot [0,1]$ by implication connectives derived from the \L{}ukasiewicz or G\"{o}del implication described above. Their positive support coincides with those of $\L$ and $\mathsf{G}$ implications. For the negative support, we consider two options. The first one dualizes the implication by the co-implication, the second results in negating implication by the conjunction of the positive part of the antecedent and the negative part of the consequent. For \L{}ukasiewicz logics these result in:
$$
(a_1,a_2) \rightarrow (b_1,b_2) \coloneqq (a_1\rightarrow_{\Luk}b_1,b_2\ominus a_2)\ \ 
(a_1,a_2) \weakrightarrow (b_1,b_2) \coloneqq (a_1\rightarrow_{\Luk}b_1,a_1 \& b_2)
$$
For G\"{o}del logics we obtain:
$$
(a_1,a_2) \rightarrow (b_1,b_2) \coloneqq (a_1\rightarrow_{\mathsf{G}}b_1,b_2\Yleft_{\mathsf{G}} a_2)\ \ 
(a_1,a_2)\weakrightarrow(b_1,b_2)\!\coloneqq\!(a_1\rightarrow_{\mathsf{G}}b_1,a_1 \wedge b_2)$$
In the first option, the interpretation arises as the one on the product algebra $[0,1]_{\Luk}\times[0,1]^\mathsf{op}_{\Luk}$ or $[0,1]_{\mathsf{G}}\times[0,1]^\mathsf{op}_{\mathsf{G}}$. In the G\"{o}del case, it relates to how the implication is interpreted in Wansing's logic $I_4C_4$ \cite{Wansing2008}.
In the second option, $\weakrightarrow$ is not congruential, and a strong congruential implication can be defined as $(a\weakrightarrow b)\wedge(\neg b\weakrightarrow\neg a)$. The second option corresponds to how implication is interpreted in product residuated bilattices of \cite{JR2012}. In the G\"{o}del case, it relates to how the implication is interpreted in Nelson's logic N4 \cite{Nelson1949}.

\begin{itemize}[noitemsep,topsep=2pt]
    \item We  denote by $[0,1]_{\Luk}\odot[0,1]_{\Luk}(\rightarrow)$ and $[0,1]_{\Luk}\odot[0,1]_{\Luk}(\weakrightarrow)$ the correspon\-ding expansions of $[0,1]\odot [0,1]$ defined using the \L{}ukasiewicz connectives.
    \item We  denote by $[0,1]_{\mathsf{G}}\odot[0,1]_{\mathsf{G}}(\rightarrow)$ and $[0,1]_{\mathsf{G}}\odot[0,1]_{\mathsf{G}}(\weakrightarrow)$ the correspon\-ding expansions of $[0,1]\odot [0,1]$ defined using the G\"{o}del connectives.
\end{itemize}
\subsection{The logics}
\label{ssec:def:logics}
The logics considered in this paper are defined through matrix semantics \cite{Font16}. We consider logical matrices of the form $(\mathbf{A},D)$ where $\mathbf{A}$ is one of the four algebras described above, and $D\subseteq A$ is a set of designated values. 
As sets of designated values, we  use various lattice filters of the form $(x,y)^{\uparrow}\coloneqq\{(x',y') \mid x \leq x' \text{ and } y'\leq y \}$ (see Figure~\ref{fig:square:with:filter}, center). The motivation is the following: $x$ represents the threshold of having enough evidence to say there is reasonable evidence supporting the truth of the formula, while $y$ represents the threshold below which one considers not to have enough evidence to say that there is reasonable evidence supporting the falsity of the formula. Of particular interest are filters $(1,0)^{\uparrow}$ 
(the evidence fully supports the formula and does not contradicts it)
and $(1,1)^{\uparrow}$ (there is some evidence that fully supports the formula).
\begin{figure}[h]
\centering
  \begin{tikzpicture}[>=stealth,relative]
	\node (U1) at (0,-1.2) {$f$};
	\node (U2) at (-1.2,0) {$n$};
	\node (U3) at (1.2,0) {$b$};
	\node (U4) at (0,1.2) {$t$};
	
	\path[-,draw] (U1) to (U2);
	\path[-,draw] (U1) to (U3);
	\path[-,draw] (U2) to (U4);
	\path[-,draw] (U3) to (U4);
	\path[->,draw] (U1) to (U4);
	\path[->,draw] (U2) to (U3);
	\end{tikzpicture}
\qquad\qquad
\resizebox{0.27\textwidth}{!}{
\begin{tikzpicture}[>=stealth,relative]
\node (U1) at (0,-2) {$(0,1)$};
\node (U2) at (-2,0) {$(0,0)$};
\node (U3) at (2,0) {$(1,1)$};
\node (U4) at (0,2) {$(1,0) $};
\node (U5) at (0.2,0.6) {$\bullet$};
\node (U6) at (0.2,0.4) {$(x,y)$};
\path[-,draw] (U1) to (U2);
\path[-,draw] (U1) to (U3);
\path[-,draw] (U2) to (U4);
\path[-,draw] (U3) to (U4);
\draw[dashed] (U5) -- (0.8,1.2);
\draw[dashed] (U5) -- (-0.6,1.4);
\end{tikzpicture}
}
\quad\quad
\resizebox{0.27\textwidth}{!}{
\begin{tikzpicture}[>=stealth,relative]
\node (U1) at (0,-2) {$(0,1)$};
\node (U2) at (-2,0) {$(0,0)$};
\node (U3) at (2,0) {$(1,1)$};
\node (U4) at (0,2) {$(1,0) $};
\node (z) at (1,0.6) {$\bullet$};
\node (zl) at (1,0.4) {$z$};
\node (cz) at (-1,0.6) {$\bullet$};
\node (czl) at (-1,0.4) {$\neg{\sim}z$};
\node (nz) at (1,-0.6) {$\bullet$};
\node (zl) at (1,-0.8) {$\neg z$\ \ \ };
\node (lukz) at (-1,-0.6) {$\bullet$};
\node (lukzl) at (-1,-0.8) {\ \ ${\sim}z$};
\path[-,draw] (U1) to (U2);
\path[-,draw] (U1) to (U3);
\path[-,draw] (U2) to (U4);
\path[-,draw] (U3) to (U4);
\draw[dashed] (U1) -- (U4);
\draw[dashed] (U2) -- (U3);
\end{tikzpicture}
}
\caption{Belnap-Dunn square $\mathbf{4}$ (left), its continuous pro\-ba\-bilistic extension with the filter $(x,y)^\uparrow$ (center) and the geometric interpretation of $\neg$, ${\sim}$, and $\neg{\sim}$ for $\L^2$ (right).}
\label{fig:square:with:filter}
\end{figure}

Each logical matrix determines the set of valid formulas in a given language (formulas, which are, for each valuation, designated), and a consequence relation (an entailment) between sets of formulas and formulas, defined as  preservation of designated values.
Regarding the two types of implication introduced in the previous subsection, the standard semantics of the logics is set as follows: 
\begin{itemize}[noitemsep,topsep=2pt]
    \item Logics with the $\rightarrow$ implication in the language (i.e. the logics $\Luk^2_{(x,y)}(\rightarrow)$ and $\mathsf{G}^2_{(x,y)}(\rightarrow)$ below) are given by the matrices $([0,1]_{\Luk}\odot[0,1]_{\Luk}(\rightarrow),(x,y)^{\uparrow})$ and $([0,1]_{\mathsf{G}}\odot[0,1]_{\mathsf{G}}(\rightarrow),(x,y)^{\uparrow})$ respectively. 
    \item Logics with the $\weakrightarrow$ implication in the language (i.e. the logics  $\Luk^2_{(x,y)}(\weakrightarrow)$ and
$\mathsf{G}_{(x,y)}^2(\weakrightarrow)$ below) are given by the matrices $([0,1]_{\Luk}\odot[0,1]_{\Luk}(\weakrightarrow),(x,y)^{\uparrow})$ and $([0,1]_{\mathsf{G}}\odot[0,1]_{\mathsf{G}}(\weakrightarrow),(x,y)^{\uparrow})$ respectively. 
\end{itemize}
We  however need to treat \L{}ukasiewicz and G\"{o}del logics separately. Therefore it is practical to define the language and semantics for them separately in a compact way as follows. We refer by $\Luk^2$ to \L{}ukasiewicz logics, and by $\mathsf{G}^2$ to G\"{o}del logics, specifying the filter in the subscript.
\begin{definition}[Language and semantics of $\Luk^2$]
\label{def:semantics:L2}
We fix a countable set $\mathsf{Prop}$ of propositional letters and consider the following language:
\[\phi\coloneqq \mathbf{0}\mid p\mid\neg\phi\mid(\phi\wedge\phi)\mid(\phi\vee\phi)\mid(\phi\rightarrow\phi)\mid(\phi\weakrightarrow\phi)\]
where $p\in\mathsf{Prop}$. We define ${\sim}\phi\coloneqq\phi\rightarrow\mathbf{0}$, ${\sim}_w\phi\coloneqq\phi\weakrightarrow\mathbf{0}$, $\phi_1\odot\phi_2\coloneqq{\sim}(\phi_1\rightarrow{\sim}\phi_2)$, and $\phi_1\leftrightarrow\phi_2\coloneqq(\phi_1\rightarrow\phi_2)\odot(\phi_2\rightarrow\phi_1)$.

 Let $v:\mathsf{Prop}\rightarrow [0,1]\times [0,1]$, and denote $v_1$ and $v_2$ its left and right coordinates, respectively. We extend $v$ as follows.
\[\begin{array}{rclrcl}
v(\mathbf{0})&=&(0,1)&
v(\phi_1\wedge\phi_2)&=&(v_1(\phi_1)\wedge v_1(\phi_2),v_2(\phi_1)\vee v_2(\phi_2))
\\
v(\neg\phi)&=&(v_2(\phi),v_1(\phi)) \quad
&
v(\phi_1\vee\phi_2)&=&(v_1(\phi_1)\vee v_1(\phi_2),v_2(\phi_1)\wedge v_2(\phi_2))
\\
&&&
v(\phi_1\rightarrow\phi_2)&=&(v_1(\phi_1)\!\rightarrow_{\Luk}\!v_1(\phi_2),v_2(\phi_2)\ominus v_2(\phi_1))
\\
&&&
v(\phi_1\weakrightarrow\phi_2)&=&(v_1(\phi_1)\!\rightarrow_{\Luk}\!v_1(\phi_2),v_1(\phi_1)\ \& \ v_2(\phi_2))
\end{array}\]
Notice that
\[\begin{array}{rcl}
v({\sim}\phi)&=&(1-v_1(\phi),1-v_2(\phi)) \\
v(\phi_1\odot\phi_2)&=&(v_1(\phi_1)\ \& \ v_1(\phi_2),v_2(\phi_1)\oplus v_2(\phi_2))\\
v(\phi_1\leftrightarrow\phi_2)&=&(1-|v_1(\phi_1)-v_1(\phi_2)|,|v_2(\phi_1)-v_2(\phi_2)|)\\
\end{array}\]
\end{definition}
\begin{remark} \label{rk:symmetry}
In section \ref{ssec:sem:properties:L2}, we  use the fact that $\neg$ corresponds to a symmetry w.r.t.\ the horizontal axis, ${\sim}$  to a symmetry w.r.t.\ the point $(0.5,0.5)$, ${\sim}\neg$ and $\neg{\sim}$ are correspond  to a symmetry w.r.t.\ the horizontal axis (see Figure \ref{fig:square:with:filter}, right). From the meaning perspective, $\neg$ corresponds for swapping the positive and negative supports of the statement.
\end{remark}
\begin{definition}[Language and semantics of $\mathsf{G}^2$]
\label{def:semantics:G2}
We fix a countable set $\mathsf{Prop}$ of propositional letters and consider the following language:
\[\phi\coloneqq \mathbf{0}\mid\mathbf{1}\mid p\mid\neg\phi\mid(\phi\wedge\phi)\mid(\phi\vee\phi)\mid(\phi\rightarrow\phi)\mid(\phi\Yleft\phi)\mid(\phi\weakrightarrow\phi)\]
where $p\in\mathsf{Prop}$. We define ${\sim}\phi\coloneqq\phi\rightarrow\mathbf{0}$, and ${\sim}_w\phi\coloneqq\phi\weakrightarrow\mathbf{0}$.

Let $v:\mathsf{Prop}\rightarrow [0,1]\times [0,1]$, and denote $v_1$ and $v_2$ its left and right coordinates, respectively. We extend $v$ as follows.
\[\begin{array}{rclrcl}
v(\mathbf{0})&=&(0,1)&v(\phi_1\wedge\phi_2)&=&(v_1(\phi_1)\wedge v_1(\phi_2),v_2(\phi_1)\vee v_2(\phi_2))
\\
 v(\mathbf{1})&=&(1,0)
&v(\phi_1\vee\phi_2)&=&(v_1(\phi_1)\vee v_1(\phi_2),v_2(\phi_1)\wedge v_2(\phi_2))
\\
v(\neg\phi)&=&(v_2(\phi),v_1(\phi))
\qquad
&v(\phi_1\rightarrow\phi_2)&=&(v_1(\phi_1)\!\rightarrow_{\mathsf{G}}\!v_1(\phi_2),v_2(\phi_2)\Yleft_{\mathsf{G}}v_2(\phi_1))\\
&&
&v(\phi_1\weakrightarrow\phi_2)&=&(v_1(\phi_1)\!\rightarrow_{\mathsf{G}}\!v_1(\phi_2),v_1(\phi_1)\wedge v_2(\phi_2))
\end{array}\]
\end{definition}
\begin{remark}[Interpreting negations and (co-)implications]\label{rem:negandimpl}
$\psi\rightarrow\psi'$ is positively supported in $\L^2$ and $\mathsf{G}^2$ is interpreted as ‘positive evidence for $\psi$ is not stronger than for $\psi'$’. The negative support is obtained via co-implications. In the case $\L^2$, $\ominus$ measures the difference between negative supports of $\psi'$ and $\psi$. 
On the other hand, in $\mathsf{G}^2$, the negative support of $\psi\rightarrow\psi'$ is non-zero (and in fact is equal to the negative support of $\psi'$) when the negative support of $\psi'$ is stronger than that of $\psi$.

On the other hand, $\weakrightarrow$ in both $\mathsf{G}^2$ and $\L^2$ could be considered as being closer to the more intuitive ‘if \ldots, then \ldots’ in natural language. Thus, to obtain negative support of $\psi\weakrightarrow\psi'$, we use positive support of $\psi$ and negative support of $\psi'$. Falsity of $\weakrightarrow$ is thus more related to the traditional understanding of implication being false when the antecedent is true and the consequent is false.
\end{remark}
\begin{definition}[Validity and consequence] 
Let $\phi$ be a formula and $\Gamma$ a set of formulas of $\Luk^2$ (resp.\ $\mathsf{G}^2$) and
$v[\Gamma] \coloneqq 
\{v(\gamma) \mid \gamma \in \Gamma \}$.
\label{def:rectangle:validity}
\begin{itemize}[noitemsep,topsep=2pt]
    \item $\phi$ is $\Luk^2_{(x,y)}$-valid (resp.\ $\mathsf{G}^2_{(x,y)}$-valid) iff $\forall v:v(\phi)\in(x,y)^\uparrow$.
    \item $\Gamma\vDash_{\Luk^2_{(x,y)}}\phi$ (resp. $\Gamma\vDash_{\mathsf{G}^2_{(x,y)}}\phi$) iff  $\forall v:\text{if }v[\Gamma]\subseteq (x,y)^\uparrow\text{ then }v(\phi)\in(x,y)^\uparrow$.
\end{itemize}
\end{definition}
\begin{convention}
\label{conv:names:logics}
We introduce the following notation.
\begin{itemize}[noitemsep,topsep=2pt]
\item $\Luk^2_{(x,y)}(\rightarrow)$ stands for the $\Luk^2_{(x,y)}$ logics over $\{\mathbf{0},\neg,\wedge,\vee,\rightarrow\}$.
\item $\mathsf{G}^2_{(x,y)}(\rightarrow)$ stands for the $\mathsf{G}^2_{(x,y)}$ logics over $\{\mathbf{0},\mathbf{1},\neg,\wedge,\vee,\rightarrow,\Yleft\}$.
\item $\Luk^2_{(x,y)}(\weakrightarrow)$ stands for the $\Luk^2_{(x,y)}$ logics over $\{\mathbf{0},\neg,\wedge,\vee,\weakrightarrow\}$.
\item $\mathsf{G}^2_{(x,y)}(\weakrightarrow)$ stands for the $\mathsf{G}^2_{(x,y)}$ logics over $\{\mathbf{0},\mathbf{1},\neg,\wedge,\vee,\weakrightarrow\}$.
\end{itemize}
We note that $\mathbf{0}$ of $\Luk^2_{(x,y)}(\rightarrow)$ can be defined as $\neg(p\rightarrow p)$. However, since there is no definition of $\mathbf{0}$ using $\weakrightarrow$, we leave it in both languages for the sake of preserving the same tableau rules for all logics. Likewise, although $\mathbf{0}$ and $\mathbf{1}$ are definable in $\mathsf{G}^2_{(x,y)}(\rightarrow)$, their presence in the language simplifies the proofs of their semantical properties (cf. Propositions~\ref{no1no0} and~\ref{prop:no:lower:limits:for:G:neg}).
\end{convention}
\begin{remark}
\label{rk:conservative:extension}
Let $\phi$ be a formula over $\{0,\wedge,\vee,\supset\}$ with $\supset$ being the Boolean implication. Denote $\phi^\bullet$ the formula obtained from it by substituting $\supset$ for $\rightarrow$, and $\phi^\circ$   by substituting $\supset$ for $\weakrightarrow$. Since $v_1$'s behave precisely like the valuations in \L{}ukasiewicz (G\"{o}del) logic,  one can see that $\phi$ is \L{}-valid ($\mathsf{G}$-valid) iff $\phi^\bullet$ is $\Luk^2_{(1,0)}(\rightarrow)$-valid ($\mathsf{G}^2_{(1,0)}(\rightarrow)$-valid). Furthermore, $\phi$ is \L{}-valid ($\mathsf{G}$-valid) iff $\phi^\circ$ is $\Luk^2_{(1,1)}(\weakrightarrow)$-valid ($\mathsf{G}^2_{(1,1)}(\weakrightarrow)$-valid). Thus, $\Luk^2_{(1,0)}(\rightarrow)$ and $\Luk^2_{(1,1)}(\weakrightarrow)$ are conservative extensions of \L{} while $\mathsf{G}^2_{(1,0)}(\rightarrow)$ and $\mathsf{G}^2_{(1,1)}(\weakrightarrow)$ are conservative extensions of $\mathsf{G}$.
\end{remark}
\begin{remark}
Notice that if $v(p)=(1,1)$, then $v(p\weakrightarrow p)=(1,1)$ in $\mathsf{G}^2(\weakrightarrow)$ and $\L^2(\weakrightarrow)$. Thus, if we refuse to consider $(1,1)$ as a designated value, the weak implication ceases to be reflexive. 
Therefore,  $\Luk^2_{(x,y)}(\weakrightarrow)$'s and $\mathsf{G}^2_{(x,y)}(\weakrightarrow)$'s with sets of designated values  not containing $(1,1)$ do not extend $\L$ and $\mathsf{G}$.
\end{remark}
In order to work with extensions of $\L$ and $\mathsf{G}$, we are going to consider only $\Luk^2_{(x,y)}(\weakrightarrow)$'s and $\mathsf{G}^2_{(x,y)}(\weakrightarrow)$'s whose sets of designated values extend $(1,1)^\uparrow$, that is $\Luk^2_{(x,1)}(\weakrightarrow)$ and $\mathsf{G}^2_{(x,1)}(\weakrightarrow)$.
In the remainder of the article $\phi,\varphi, \chi, \psi$ denote formulas. Unless there is some ambiguity, we do not specify to which language they belong.
\subsection{Semantical properties of $\Luk^2_{(x,y)}(\rightarrow)$}
\label{ssec:sem:properties:L2}
In this section, we are going to explore how the choice of $(x,y)^\uparrow$ affects the set of $\Luk^2_{(x,y)}(\rightarrow)$-valid formulas. In particular, we are providing families of formulas differentiating different $\Luk^2_{(x,y)}(\rightarrow)$-validities.
\begin{definition}[Closure under conflation]
\label{def:closure:conflation}
We say that a filter $D$ of $[0,1]\odot[0,1]$ is \emph{closed under conflation} if for any $(x,y)\in D$, we have $(1-y,1-x)\in D$.
\end{definition}
In bilattices, the negation $\neg$ corresponds to a symmetry w.r.t.\ the horizontal axis and the conflation corresponds to a symmetry w.r.t.\ the vertical axis. Notice that a filter
$(x,y)^\uparrow$ is closed under conflation iff $y=1-x$.
In $\Luk^2_{(x,y)}(\rightarrow)$, conflation can be defined as $\neg{\sim}$ or equi\-va\-lently~${\sim}\neg$ (cf. Figure~\ref{fig:square:with:filter}, right).
\begin{proposition}
\label{prop:rectangles:to:squares}
\begin{itemize}
\item[]
\item Let $y\geq 1-x$. Then $\phi$ is $\Luk^2_{(x,y)}(\rightarrow)$-valid iff $\phi$ is $\Luk^2_{(x,1-x)}(\rightarrow)$-valid.
\item Let $y<1-x$. Then $\phi$ is $\Luk^2_{(x,y)}(\rightarrow)$-valid iff $\phi$ is $\Luk^2_{(1-y,y)}(\rightarrow)$-valid.
\end{itemize}
\end{proposition}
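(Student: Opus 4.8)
The plan is to reduce validity with respect to an arbitrary rectangular filter $(x,y)^\uparrow$ to validity with respect to the conflation-closed (``square'') filter it determines, using a single tool: the conflation $\neg{\sim}$, which I claim is an \emph{automorphism} of $[0,1]_{\Luk}\odot[0,1]_{\Luk}(\rightarrow)$. Both items will then follow uniformly, the nontrivial implication coming from this automorphism and the easy converse from a filter inclusion.

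First I would record the key lemma: the map $c(a_1,a_2)\coloneqq(1-a_2,1-a_1)$ --- which is exactly the value-function of $\neg{\sim}$, geometrically the reflection across the truth axis --- is an involutive automorphism of the algebra. This requires checking that $c$ commutes with $\neg,\wedge,\vee$ and fixes $\mathbf{0}=(0,1)$, which is immediate, and crucially that it commutes with $\rightarrow$. The latter is the only real computation: one verifies $c((a_1,a_2)\rightarrow(b_1,b_2))=(b_2\rightarrow_{\Luk}a_2,\,a_1\ominus b_1)=c(a_1,a_2)\rightarrow c(b_1,b_2)$, where the first coordinate uses $1-(b_2\ominus a_2)=\min(1,1-b_2+a_2)=b_2\rightarrow_{\Luk}a_2$ and the second uses $1-(a_1\rightarrow_{\Luk}b_1)=a_1\ominus b_1$. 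Conceptually this is unsurprising: the algebra is $[0,1]_{\Luk}\times[0,1]^{\mathsf{op}}_{\Luk}$, and $c$ is the factor-swap composed with the MV-isomorphism $a\mapsto 1-a$ between $[0,1]_{\Luk}$ and $[0,1]^{\mathsf{op}}_{\Luk}$, commuting with $\neg$ because $c=\neg{\sim}={\sim}\neg$.

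Once $c$ is an automorphism, for any valuation $v$ I set $v^c(p)\coloneqq c(v(p))$; an induction on $\phi$ gives $v^c(\phi)=c(v(\phi))$, and $v\mapsto v^c$ is a bijection since $c\circ c=\mathrm{id}$. Because $c$ sends the filter $(x,y)^\uparrow$ onto $(1-y,1-x)^\uparrow$, transporting the universal quantifier over valuations yields the conflation symmetry: $\phi$ is $\Luk^2_{(x,y)}(\rightarrow)$-valid iff $\phi$ is $\Luk^2_{(1-y,1-x)}(\rightarrow)$-valid.

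Finally I would combine the two. If $\phi$ is $(x,y)^\uparrow$-valid then by the symmetry it is also $(1-y,1-x)^\uparrow$-valid, so for every $v$ the value $v(\phi)$ lies in $(x,y)^\uparrow\cap(1-y,1-x)^\uparrow$. An elementary computation of this intersection gives $(x,1-x)^\uparrow$ when $y\geq 1-x$ and $(1-y,y)^\uparrow$ when $y<1-x$ --- precisely the square filters in the statement --- establishing the forward implication in each item. The converse is immediate from the inclusions $(x,1-x)^\uparrow\subseteq(x,y)^\uparrow$ (when $y\geq 1-x$) and $(1-y,y)^\uparrow\subseteq(x,y)^\uparrow$ (when $y<1-x$), since validity with respect to a smaller set of designated values implies validity with respect to a larger one. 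I expect the main obstacle to be the single verification that $c$ commutes with $\rightarrow$: the asymmetry of the second coordinate $b_2\ominus a_2$ makes it the one step where the reflection must be checked by hand rather than read off from the bilattice structure.
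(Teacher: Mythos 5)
Your proof is correct, and its skeleton matches the paper's: both arguments turn on the conflation map $v^*(p)=(1-v_2(p),1-v_1(p))$ (your $v^c$) commuting with all connectives, plus the filter inclusions $(x,1-x)^\uparrow\subseteq(x,y)^\uparrow$, resp.\ $(1-y,y)^\uparrow\subseteq(x,y)^\uparrow$, for the easy direction. The genuine difference lies in how the commutation is established and how the conclusion is extracted. The paper does not compute: it observes that $\neg{\sim}$ distributes over $\neg,\wedge,\vee,\rightarrow$ up to $\leftrightarrow$ because the corresponding biconditionals are derivable in the Hilbert calculus for $\Luk_{(\neg)}$ of \cite{BilkovaFrittellaMajerNazari2020}, which is sound and complete w.r.t.\ $\Luk^2_{(1,0)}(\rightarrow)$, and then runs the induction on $\phi$; your direct verification that $c$ is an involutive automorphism of the algebra --- with the single real computation $c\bigl((a_1,a_2)\rightarrow(b_1,b_2)\bigr)=c(a_1,a_2)\rightarrow c(b_1,b_2)$, whose coordinate calculations are right --- is more elementary and self-contained, trading an appeal to an external completeness theorem for a two-line arithmetic check. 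For the endgame, the paper argues by contradiction: if $v(\phi)\in(x,y)^\uparrow\setminus(x,1-x)^\uparrow$ then $v^*(\phi)\notin(x,y)^\uparrow$, since only the filters $(x,1-x)^\uparrow$ are closed under conflation. You instead isolate the general symmetry ``$\Luk^2_{(x,y)}(\rightarrow)$-valid iff $\Luk^2_{(1-y,1-x)}(\rightarrow)$-valid'' and compute the intersection $(x,y)^\uparrow\cap(1-y,1-x)^\uparrow$, which yields both bullet points uniformly, whereas the paper proves only the first and declares the second analogous. The two endgames are equivalent; yours buys a reusable intermediate statement and uniformity, the paper's buys brevity at the cost of leaning on results imported from the companion paper.
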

The following statements show that by choosing different sets of designated values, we can alter the sets of tautologies.
\begin{proposition}\label{smallsquares}
Let $m,n\in\{2,3,\ldots\}$. Then  $\Luk^2_{\left(\frac{m-1}{m},\frac{1}{m}\right)}\subsetneq\L^2_{\left(\frac{n-1}{n},\frac{1}{n}\right)}$ iff $m>n$.
\end{proposition}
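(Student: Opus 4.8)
The plan is to read $\Luk^2_{(x,y)}$ as its set of valid formulas and to exploit that, for the filters in question, $y=\tfrac1m=1-\tfrac{m-1}m$, so each $(\tfrac{m-1}m,\tfrac1m)^\uparrow$ is a conflation-closed ``square''. Writing $D_m=(\tfrac{m-1}m,\tfrac1m)^\uparrow$, validity of $\phi$ in $\Luk^2_{(\frac{m-1}m,\frac1m)}$ means $v(\phi)\in D_m$ for all $v$, i.e.\ $v_1(\phi)\geq\tfrac{m-1}m$ and $v_2(\phi)\leq\tfrac1m$ for every $v$. I will also use that along the first coordinate a valuation behaves exactly like a \Luk-valuation, so $v_1(\phi)$ depends only on the $v_1$ of the atoms, and symmetrically $v_2(\phi)$ depends only on the $v_2$ of the atoms.

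The easy half is monotonicity of designated sets. For $m>n$ we have $\tfrac{m-1}m\geq\tfrac{n-1}n$ and $\tfrac1m\leq\tfrac1n$, hence $D_m\subseteq D_n$; therefore any formula designated under all valuations for $D_m$ is also designated for $D_n$, giving $\Luk^2_{(\frac{m-1}m,\frac1m)}\subseteq\Luk^2_{(\frac{n-1}n,\frac1n)}$. Reading this inclusion with the roles of $m,n$ exchanged yields the ``only if'' direction: if $m=n$ the two logics coincide, and if $m<n$ then $\Luk^2_{(\frac{n-1}n,\frac1n)}\subseteq\Luk^2_{(\frac{m-1}m,\frac1m)}$, so a strict inclusion $\Luk^2_{(\frac{m-1}m,\frac1m)}\subsetneq\Luk^2_{(\frac{n-1}n,\frac1n)}$ would force equality, a contradiction; hence strict inclusion requires $m>n$.

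The substantial part is strictness for $m>n$, for which I would produce a formula valid in the $n$-logic but not in the $m$-logic. I propose the one-variable family
\[
\phi_n\ :=\ p\vee{\sim}(\underbrace{p\odot\cdots\odot p}_{n-1}),
\]
which lives in the $\rightarrow$-language since ${\sim}$ and $\odot$ are definable from $\rightarrow$ and $\mathbf0$. Writing $t=v_1(p)$, a direct computation gives $v_1(\phi_n)=\max\!\big(t,\min(1,(n-1)(1-t))\big)$, whose two branches meet at $t=\tfrac{n-1}n$ with common value $\tfrac{n-1}n$; hence $\min_v v_1(\phi_n)=\tfrac{n-1}n$. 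A parallel computation with $s=v_2(p)$ yields $v_2(\phi_n)=\min\!\big(s,\max(0,1-(n-1)s)\big)$, whose maximum $\tfrac1n$ is attained at $s=\tfrac1n$; hence $\max_v v_2(\phi_n)=\tfrac1n$. Consequently $\phi_n$ is $\Luk^2_{(\frac{n-1}n,\frac1n)}$-valid, while the valuation with $v_1(p)=\tfrac{n-1}n$ witnesses $v_1(\phi_n)=\tfrac{n-1}n<\tfrac{m-1}m$, so $\phi_n$ fails to be $\Luk^2_{(\frac{m-1}m,\frac1m)}$-valid. This separates the two logics and completes the strict inclusion.

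I expect the main obstacle to be pinning down this separating family: one needs formulas whose first-coordinate minimum is \emph{exactly} $\tfrac{n-1}n$ (not merely bounded below by some value), and, because the designated set is two-dimensional, one must simultaneously keep the second coordinate bounded above by $\tfrac1n$. The construction is engineered so that the $(n-1)$-fold strong conjunction pulls the descending line to slope $-(n-1)$, making it cross the diagonal $t$ precisely at $\tfrac{n-1}n$, and the analogous computation on the second coordinate keeps it below $\tfrac1n$; verifying these two extremal values is the only genuinely computational step.
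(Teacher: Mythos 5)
Your proof is correct, and although its skeleton (inclusion from monotonicity of the designated filters, strictness from a separating family of formulas) necessarily matches the paper's, the key ingredient — the separating family — is genuinely different. The paper uses the $(n{+}1)$-variable formulas $\mathsf{F}_n=\bigvee_{1\leq i<j\leq n+1}(p_i\leftrightarrow p_j)$, whose bound $v(\mathsf{F}_n)\geq\left(\tfrac{n-1}{n},\tfrac{1}{n}\right)$ rests on a pigeonhole/geometric argument: $v_1(p_i\leftrightarrow p_j)$ is the complement of the distance between $v_1(p_i)$ and $v_1(p_j)$, and among $n+1$ points of $[0,1]$ two must lie within $\tfrac1n$ of each other, the bound being tight for equally spaced points. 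You instead use the one-variable formula $p\vee{\sim}(p^{\odot(n-1)})$, verified by a direct piecewise-linear computation in each coordinate; your formulas $v_1=\max\left(t,\min(1,(n-1)(1-t))\right)$ with minimum $\tfrac{n-1}{n}$ at the crossing point, and $v_2=\min\left(s,\max(0,1-(n-1)s)\right)$ with maximum $\tfrac1n$, are correct (the max of an increasing and a non-increasing function is minimized at their crossing, dually for the min), as is the witness $v_1(p)=\tfrac{n-1}{n}$ refuting validity in the $m$-logic for $m>n$. Your construction is more economical — one variable, no combinatorics, fully explicit extremal values — while the paper's $\mathsf{F}_n$ has the advantage of being reused as the building block of the later results (Propositions~\ref{bigsquares} and~\ref{nomodusponens}), so a reader of the paper gets more mileage out of it. You also spell out the ``only if'' direction explicitly via the reverse inclusion for $m<n$, which the paper leaves essentially implicit; that part of your argument is fine as well.
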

Note, however, that while $\L_{\left(\frac{1}{2},\frac{1}{2}\right)}$ validates $p\vee{\sim}p$, it does not collapse into classical logic as the following propositions show.
\begin{proposition}\label{bigsquares}
Let $m,n\in\{3,4,\ldots\}$. Then $\L^2_{\left(\frac{m-2}{2m},\frac{m+2}{2m}\right)}\!\subsetneq\!\L^2_{\left(\frac{n-2}{2n},\frac{n+2}{2n}\right)}$ iff $m\!>\!n$.
\end{proposition}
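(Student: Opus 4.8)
The plan is to split the biconditional into an inclusion part and a strictness part, mirroring Proposition~\ref{smallsquares}. I first record the arithmetical fact that makes these filters tractable: writing $x_k=\frac{k-2}{2k}$ and $y_k=\frac{k+2}{2k}$, one has $x_k+y_k=1$, so $y_k=1-x_k$ and every $(x_k,y_k)^\uparrow$ is closed under conflation (Definition~\ref{def:closure:conflation}). Moreover $x_k=\frac12-\frac1k$ is strictly increasing and $y_k=\frac12+\frac1k$ strictly decreasing in $k$, so $m>n$ yields $x_n\le x_m$ and $y_m\le y_n$, i.e.\ $(x_m,y_m)^\uparrow\subseteq(x_n,y_n)^\uparrow$. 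Since validity is preservation of designated values, a smaller filter validates fewer formulas, and this containment of filters immediately gives $\L^2_{(x_m,y_m)}\subseteq\L^2_{(x_n,y_n)}$ whenever $m>n$. This disposes of the inclusion and reduces the task to producing, for each $n$, a formula valid under $(x_n,y_n)^\uparrow$ but invalid under every strictly smaller filter $(x_m,y_m)^\uparrow$, $m>n$.

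For the separation I would turn validity into a one-dimensional condition using conflation-closedness. Because $y_k=1-x_k$, a value $(s,t)$ lies in $(x_k,y_k)^\uparrow$ iff $\min(s,1-t)\ge x_k$, so $\phi$ is $(x_k,y_k)^\uparrow$-valid iff $\min_v\min\bigl(v_1(\phi),1-v_2(\phi)\bigr)\ge x_k$. I then restrict to $\neg$-free formulas (built from $\mathbf{0},\wedge,\vee,\rightarrow$ only), for which the two coordinates decouple: $v_1(\phi)$ is a function $f_\phi$ of the first coordinates of the atoms behaving exactly as in standard \L{}ukasiewicz logic (Remark~\ref{rk:conservative:extension}), while a short induction over the connectives establishes the dual identity $1-v_2(\phi)=f_\phi(1-b_1,\dots,1-b_k)$ in terms of the second coordinates $b_i$. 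Hence $\max_v v_2(\phi)=1-\min f_\phi$, and the validity condition collapses to the single inequality $\min f_\phi\ge x_k$.

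It remains to exhibit, for each $n\ge 3$, a $\neg$-free one-variable formula $\theta_n$ whose \L{}ukasiewicz function $f_{\theta_n}$ has minimum exactly $x_n=\frac{n-2}{2n}\in(0,1)$. I would take $f_{\theta_n}$ to be the integer-slope piecewise-linear ``valley''
\[
f_{\theta_n}(a)=\min\bigl(1,\ \max\bigl((1-2n)a+(n-2),\ (1+2n)a-(n-2)\bigr)\bigr),
\]
which equals $1$ near $a=0,1$, dips to $\frac{n-2}{2n}$ at $a=\frac{n-2}{2n}$, and stays in $[x_n,1]$. By McNaughton's theorem (or by directly rewriting these truncated linear terms through $\oplus$, ${\sim}$, and $\rightarrow$) this function is \L{}ukasiewicz-definable, so $\theta_n$ exists. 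Then $\min f_{\theta_n}=x_n$ gives $\theta_n\in\L^2_{(x_n,y_n)}$, whereas $x_n<x_m$ for every $m>n$ forces $\min f_{\theta_n}<x_m$, so $\theta_n\notin\L^2_{(x_m,y_m)}$; together with the inclusion this yields the proper inclusion $\L^2_{(x_m,y_m)}\subsetneq\L^2_{(x_n,y_n)}$ for $m>n$.

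For the converse I argue by antisymmetry of proper inclusion. Suppose $\L^2_{(x_m,y_m)}\subsetneq\L^2_{(x_n,y_n)}$ with $m\le n$: if $m=n$ the two logics coincide, contradicting properness; if $m<n$, the direction just proved gives $\L^2_{(x_n,y_n)}\subsetneq\L^2_{(x_m,y_m)}$, incompatible with the assumed strict inclusion. Hence $m>n$, completing the biconditional. I expect the genuine obstacle to be the valley construction of $\theta_n$: one must verify both that $f_{\theta_n}$ is assembled from linear pieces with integer coefficients (so that it is definable at all) and that its minimum is \emph{exactly} $x_n$ rather than accidentally smaller, since it is this exact value that pins the separation to the filter $(x_n,y_n)^\uparrow$ and distinguishes it from its neighbours.
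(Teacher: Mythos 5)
Your proof is correct, but the strictness half takes a genuinely different route from the paper's. The inclusion-via-filter-containment step and the closing antisymmetry argument match the paper. For separation, however, the paper reuses the family of Proposition~\ref{smallsquares}: it shows that $\mathsf{F}_2\odot\mathsf{F}_n$ (with $\mathsf{F}_n=\bigvee_{1\leq i<j\leq n+1}(p_i\leftrightarrow p_j)$) satisfies $v(\mathsf{F}_2\odot\mathsf{F}_n)\geq\left(\frac{n-2}{2n},\frac{n+2}{2n}\right)$ by a pigeonhole argument on $n+1$ points in $[0,1]$, with the bound attained at equally spaced points, so that $\mathsf{F}_2\odot\mathsf{F}_k$ is $\Luk^2_{\left(\frac{n-2}{2n},\frac{n+2}{2n}\right)}$-valid iff $k\geq n$. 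You instead first reduce validity over these conflation-closed filters to the one-dimensional condition $\min f_\phi\geq x_k$ for $\neg$-free formulas --- your coordinate-decoupling induction is sound, since for $\rightarrow$ one has $1-\max(0,b_2-b_1)=\min(1,1-(1-b_1)+(1-b_2))$, i.e.\ the second coordinate computes the dual \L{}ukasiewicz function --- and then invoke McNaughton's theorem to obtain a one-variable valley function with minimum exactly $x_n$; your two lines indeed cross at $(x_n,x_n)$ and have integer coefficients, so the witness $\theta_n$ exists and separates exactly as you claim. What the paper's route buys: it is elementary and self-contained (no appeal to McNaughton's theorem, which the paper never cites), the separating formulas are fully explicit, and the same family $\mathsf{F}_n$ does double duty later in Proposition~\ref{nomodusponens}. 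What your route buys: single-variable witnesses, a conceptual reduction exposing that validity over conflation-closed filters is precisely a statement about minima of McNaughton functions, and immediate generalization to any pair of conflation-closed filters with distinct rational thresholds rather than just the particular sequence $\frac{k-2}{2k}$.
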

We end this subsection by noting that all $\Luk^2_{(x,y)}(\rightarrow)$'s where $(x,y)^\uparrow$ is prime are paraconsistent in the following sense: $p,\neg p\nvDash_{\Luk^2_{(x,y)}(\rightarrow)}q$. 
Furthermore, if $\left(\frac{1}{2},\frac{1}{2}\right)\in(x,y)^\uparrow$, the logic is paraconsistent even w.r.t. $\sim$ since $p,{\sim}p\nvDash_{\Luk^2_{(x,y)}(\rightarrow)}q$. 
Last but not least, most $\Luk^2_{(x,y)}(\rightarrow)$'s are not closed under modus ponens.
\begin{proposition}\label{nomodusponens}
Let $\Luk^2_{(1,0)}(\rightarrow)\subsetneq\Luk^2_{(x,y)}(\rightarrow)$. Then $\Luk^2_{(x,y)}(\rightarrow)$ is not closed under modus ponens.
\end{proposition}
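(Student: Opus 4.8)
The plan is to exploit the strict inclusion to extract a ``witness'' formula, then amplify its defect by iterated strong conjunction until validity breaks while keeping every one-step implication valid, so that modus ponens fails at the first broken step. First I would reduce to a convenient filter. Being \emph{closed under modus ponens} is a property of the \emph{set} of valid formulas alone, and by Proposition~\ref{prop:rectangles:to:squares} each $\Luk^2_{(x,y)}(\rightarrow)$ has exactly the valid formulas of a logic whose filter is a square $(x',1-x')^\uparrow$ (either $(x,1-x)$ or $(1-y,y)$). So we may assume the filter is $(x,1-x)^\uparrow$. The hypothesis $\Luk^2_{(1,0)}(\rightarrow)\subsetneq\Luk^2_{(x,y)}(\rightarrow)$ forces this square to be proper, i.e.\ $0<x<1$; the only reduct excluded is the trivial filter $(0,1)^\uparrow=[0,1]^2$, for which every formula is valid and modus ponens holds vacuously, so it is legitimately set aside. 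The strict inclusion then yields a formula $\chi$ that is $\Luk^2_{(x,1-x)}(\rightarrow)$-valid but not $\Luk^2_{(1,0)}(\rightarrow)$-valid, and I fix a valuation $v_0$ with $v_0(\chi)\neq(1,0)$, so that $v_{0,1}(\chi)<1$ or $v_{0,2}(\chi)>0$.

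Next I would amplify. Writing $\chi^{\odot n}$ for the $n$-fold strong conjunction $\chi\odot\cdots\odot\chi$, associativity of $\&$ and $\oplus$ gives
\[v(\chi^{\odot n})=\bigl(\max(0,\,n\,v_1(\chi)-(n-1)),\ \min(1,\,n\,v_2(\chi))\bigr).\]
Since $0<x<1$, at $v_0$ either the first coordinate decreases to $0<x$ (when $v_{0,1}(\chi)<1$) or the second increases to $1>1-x$ (when $v_{0,2}(\chi)>0$) as $n$ grows; hence $\chi^{\odot n}$ is \emph{not} valid for all large $n$. Let $N\geq2$ be the least index for which $\chi^{\odot N}$ is invalid, which is well defined and satisfies $N\geq2$ because $\chi^{\odot1}=\chi$ is valid.

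The crux is the one-step lemma: $\chi^{\odot k}\to\chi^{\odot(k+1)}$ is valid for every $k$. Abbreviating $a=v_1(\chi)\geq x$ and $b=v_2(\chi)\leq 1-x$, a routine computation with $\rightarrow_{\Luk}$ and $\ominus$ shows that $v_1(\chi^{\odot k}\to\chi^{\odot(k+1)})$ equals $a$, $k(1-a)$, or $1$ according to the position of $a$ relative to $\tfrac{k-1}{k}$ and $\tfrac{k}{k+1}$, and is $\geq x$ in every case; the only nontrivial one uses that $a\geq x$ together with $a<\tfrac{k}{k+1}$ forces $k(1-a)>\tfrac{k}{k+1}>x$. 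Symmetrically the second coordinate equals $b$, $1-kb$, or $0$, and is $\leq 1-x$ throughout, the nontrivial case using $b\leq 1-x$ and $(k+1)b\geq1$ to get $1-kb\leq\tfrac{1}{k+1}\leq 1-x$. Consequently $\chi^{\odot(N-1)}$ is valid by minimality of $N$, the implication $\chi^{\odot(N-1)}\to\chi^{\odot N}$ is valid by the lemma, yet $\chi^{\odot N}$ is not valid. This is precisely a failure of modus ponens, which proves the proposition.

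The main obstacle is the one-step lemma of the third paragraph: it is a finite but slightly delicate piecewise-linear case analysis in $\max(0,\cdot)$ and $\min(1,\cdot)$, and it goes through exactly because $0<x<1$. The reduction to a square filter is what makes both coordinates simultaneously active and thereby guarantees that the defect of $\chi$, wherever it lies, is genuinely amplified; without it one would have to argue separately that a witness whose defect is confined to a coordinate on which the filter imposes no constraint cannot arise.
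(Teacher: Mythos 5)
Your proof is correct, but it takes a genuinely different route from the paper's. The paper, after the same reduction to square filters via Proposition~\ref{prop:rectangles:to:squares}, observes that the schema $\psi\rightarrow(\psi'\rightarrow(\psi\odot\psi'))$ is $\Luk^2_{(1,0)}(\rightarrow)$-valid, hence valid in every larger logic; so to break modus ponens it suffices to exhibit \emph{valid} $\psi,\psi'$ with $\psi\odot\psi'$ \emph{invalid}. It then does this constructively, reusing the families $\mathsf{F}_n$ of Proposition~\ref{smallsquares}: a case split at $x'=\tfrac{1}{2}$, with explicit countervaluations for $\mathsf{F}_k\odot\mathsf{F}_k$ and $\mathsf{F}_2\odot\mathsf{F}_2$. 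You instead take an \emph{arbitrary} witness $\chi$ of the strict inclusion, amplify it by self-conjunction $\chi^{\odot n}$, and replace the paper's schema by your semantic one-step lemma that $\chi^{\odot k}\rightarrow\chi^{\odot(k+1)}$ is valid whenever $\chi$ is valid in the square-filter logic --- a statement that is \emph{not} an instance of any $\Luk$-valid schema (since $p\rightarrow p\odot p$ is not $\Luk$-valid), so the piecewise-linear case analysis genuinely carries the load; the minimal breaking index $N$ then localizes the failure. What the paper's route buys is concreteness (named formulas witnessing the failure) and economy (the $\mathsf{F}_n$ machinery is already needed for Propositions~\ref{smallsquares} and~\ref{bigsquares}); what your route buys is uniformity (no case split on $x'$, no explicit formula construction) and the stronger insight that \emph{any} witness of strictness forces a failure of modus ponens --- indeed your conclusion also yields valid $\psi=\chi^{\odot(N-1)}$, $\psi'=\chi$ with $\psi\odot\psi'$ invalid, i.e., an abstract existence proof of exactly the ingredients the paper constructs by hand. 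One shared caveat: for the trivial filter $(0,1)^{\uparrow}$ the statement as written actually fails (every formula is valid, so closure under modus ponens holds), and the hypothesis $\Luk^2_{(1,0)}(\rightarrow)\subsetneq\Luk^2_{(0,1)}(\rightarrow)$ does hold; you flag and exclude this case explicitly, whereas the paper's Case 2 ($0\leq x\leq\tfrac{1}{2}$) silently assumes $x>0$ when it claims $\mathsf{F}_2\odot\mathsf{F}_2$ evaluating to $(0,1)$ makes it invalid. So your treatment is, if anything, slightly more careful on that edge case.
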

\subsection{Semantical properties of $\mathsf{G}^2(\rightarrow)$}
\label{ssec:sem:properties:G2}
In this section, we show that all $\mathsf{G}^2_{(x,y)}(\rightarrow)$ logics have the same set of valid formulas. This means that just as the original G\"{o}del logic, $\mathsf{G}^2(\rightarrow)$ can be seen as the logic of comparative truth. Furthermore, the presence of the second dimension allows to interpret $\mathsf{G}^2(\rightarrow)$ as the logic of comparative truth and falsehood.
\begin{proposition}
\label{no1no0}
Let $\phi$ be a formula over $\{\mathbf{0},\mathbf{1},\neg,\wedge,\vee,\rightarrow,\Yleft\}$. For any $v(p)=(x,y)$ let $v^*(p)=(1-y,1-x)$. Then $v(\phi)=(x,y)$ iff $v^*(\phi)=(1-y,1-x)$.
\end{proposition}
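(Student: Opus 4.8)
The plan is to prove the slightly stronger statement that the conflation map $\mathsf{c}\colon(a,b)\mapsto(1-b,1-a)$ — geometrically the symmetry w.r.t.\ the vertical axis — is an automorphism of the algebra $[0,1]_{\mathsf{G}}\odot[0,1]_{\mathsf{G}}(\rightarrow)$ enriched with the constants $\mathbf{0},\mathbf{1}$ and the co-implication $\Yleft$. Concretely, I would show by induction on the complexity of $\phi$ that $v^*(\phi)=\mathsf{c}(v(\phi))$ for every formula $\phi$ over $\{\mathbf{0},\mathbf{1},\neg,\wedge,\vee,\rightarrow,\Yleft\}$. Since $\mathsf{c}$ is an involution, hence a bijection, this at once gives the stated equivalence: $v(\phi)=(x,y)$ iff $\mathsf{c}(v(\phi))=\mathsf{c}(x,y)=(1-y,1-x)$ iff $v^*(\phi)=(1-y,1-x)$. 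So the whole content is the claim that $\mathsf{c}$ commutes with the interpretation of each connective.

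For the base cases, $v^*(p)=\mathsf{c}(v(p))$ holds on propositional letters by the very definition of $v^*$, and on constants one checks $\mathsf{c}(0,1)=(0,1)=v(\mathbf{0})$ and $\mathsf{c}(1,0)=(1,0)=v(\mathbf{1})$, so $v^*$ and $\mathsf{c}\circ v$ agree there as well. For the inductive step I would verify that $\mathsf{c}$ is a homomorphism for each connective. The cases $\neg,\wedge,\vee$ are immediate from the fact that $x\mapsto 1-x$ is an order-reversing bijection of $[0,1]$, which interchanges $\min$ and $\max$: for instance $\mathsf{c}\bigl((a_1,a_2)\wedge(b_1,b_2)\bigr)=(\min(1-a_2,1-b_2),\max(1-a_1,1-b_1))=\mathsf{c}(a_1,a_2)\wedge\mathsf{c}(b_1,b_2)$, and similarly for $\vee$ and $\neg$.

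The only steps requiring a genuine computation are $\rightarrow$ and $\Yleft$, and this is where the crux lies: one must check that the two G\"{o}del residua $\rightarrow_{\mathsf{G}}$ and $\Yleft_{\mathsf{G}}$ are swapped between the coordinates under $x\mapsto 1-x$. The two identities to establish are
\[
1-(b\Yleft_{\mathsf{G}}a)=(1-a)\rightarrow_{\mathsf{G}}(1-b),
\qquad
1-(a\rightarrow_{\mathsf{G}}b)=(1-b)\Yleft_{\mathsf{G}}(1-a),
\]
valid for all $a,b\in[0,1]$; each follows by splitting into the two cases of the definitions of $\rightarrow_{\mathsf{G}}$ and $\Yleft_{\mathsf{G}}$ (whether $a\leq b$ or not) and using $a\leq b$ iff $1-b\leq 1-a$. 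Feeding these into the coordinatewise definition of $v(\phi_1\rightarrow\phi_2)=(v_1(\phi_1)\rightarrow_{\mathsf{G}}v_1(\phi_2),\,v_2(\phi_2)\Yleft_{\mathsf{G}}v_2(\phi_1))$ yields $\mathsf{c}(v(\phi_1)\rightarrow v(\phi_2))=\mathsf{c}(v(\phi_1))\rightarrow\mathsf{c}(v(\phi_2))$; the $\Yleft$ case is handled by the same pair of identities with the roles of $\rightarrow_{\mathsf{G}}$ and $\Yleft_{\mathsf{G}}$ exchanged (its coordinatewise definition being dual to that of $\rightarrow$).

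The main — and really only — obstacle is the bookkeeping in this last step: one has to keep track that conflation genuinely transports the positive-coordinate residuum of $\rightarrow$ into the negative-coordinate residuum and vice versa, so that the two displayed residuation identities are applied to the correct coordinates. Once they are in hand, the induction closes with no further difficulty. It is worth stressing that the restriction of the language to $\{\mathbf{0},\mathbf{1},\neg,\wedge,\vee,\rightarrow,\Yleft\}$ is essential: conflation is not expected to commute with the non-congruential $\weakrightarrow$, which is precisely why $\weakrightarrow$ is absent from the statement.
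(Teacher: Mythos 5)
Your proof is correct, but it resolves the crucial step by a genuinely different route than the paper. Both arguments are structural inductions on $\phi$ whose only real content is the $\rightarrow$/$\Yleft$ case; you settle that case by a direct computation, namely the two residuation-swap identities $1-(b\Yleft_{\mathsf{G}}a)=(1-a)\rightarrow_{\mathsf{G}}(1-b)$ and $1-(a\rightarrow_{\mathsf{G}}b)=(1-b)\Yleft_{\mathsf{G}}(1-a)$, which indeed hold by splitting on $a\leq b$ versus $a>b$. The paper instead first proves an auxiliary order-duality claim --- for all formulas $\chi,\chi'$ and opposite coordinate pairs $\mathbf{x}\neq\mathbf{y}$, $\mathbf{x}'\neq\mathbf{y}'$, one has $v_{\mathbf{x}}(\chi)\geq v_{\mathbf{x}'}(\chi')$ iff $v^*_{\mathbf{y}}(\chi')\geq v^*_{\mathbf{y}'}(\chi)$ --- established by a simultaneous induction on the pair $(\chi,\chi')$, and then discharges the $\rightarrow$ case of the main induction by translating $v(\psi_1\rightarrow\psi_2)=(x,y)$ into order comparisons among subformulas and constants and dualizing those comparisons. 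Your version is more elementary and self-contained: conflation $\mathsf{c}$ is shown outright to be an automorphism of the expanded algebra, and its being an involution gives the stated biconditional for free. What the paper's formulation buys is an order-theoretic statement in the spirit of ``G\"{o}del logic as a logic of order,'' the same style of reasoning (comparisons rather than numerical values) that its $\mathcal{T}(\mathsf{G}^2)$ tableaux manipulate; what yours buys is brevity and a single reusable algebraic fact. One caveat, not of your making: Definition~\ref{def:semantics:G2} omits the semantic clause for $\Yleft$, and your assumption that it is the coordinatewise dual of $\rightarrow$, i.e.\ $v(\phi_1\Yleft\phi_2)=(v_1(\phi_1)\Yleft_{\mathsf{G}}v_1(\phi_2),\,v_2(\phi_2)\rightarrow_{\mathsf{G}}v_2(\phi_1))$, is the intended one (it is what the tableau rules for $\Yleft$ and the NNF equivalences in the appendix encode); with it, your computation for the $\Yleft$ case goes through exactly as you describe.
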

\begin{proposition}
\label{prop:no:lower:limits:for:G:neg}
Let $\phi$ be a formula over $\{\mathbf{0},\mathbf{1},\neg,\wedge,\vee,\rightarrow,\Yleft\}$  such that $v(\phi)\geq (x,y)$ for any $v$ and some fixed $(x,y)\neq(0,1)$. Then $v'(\phi)=(1,0)$ for any $v'$.
\end{proposition}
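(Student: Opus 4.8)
The plan is to reduce the two–dimensional statement to a one–dimensional claim about the first coordinate and then exploit the order–invariance that is characteristic of G\"odel logics. Concretely, I will first show that the hypothesis $(x,y)\neq(0,1)$ forces the existence of some $\varepsilon>0$ with $v_1(\phi)\geq\varepsilon$ for \emph{every} valuation $v$; then I will prove that such a uniform positive lower bound on the first coordinate already forces $v_1(\phi)=1$ for all $v$; and finally I will recover $v_2(\phi)=0$ for all $v$ from the conflation symmetry of Proposition~\ref{no1no0}.

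The engine of the argument is an invariance lemma. For an increasing bijection $h\colon[0,1]\to[0,1]$ with $h(0)=0$ and $h(1)=1$, put $H(a_1,a_2)\coloneqq(h(a_1),h(a_2))$, and given a valuation $v$ let $v^h$ be the valuation with $v^h(p)\coloneqq H(v(p))$. I will prove by induction on $\phi$ that $v^h(\phi)=H(v(\phi))$. The point is that every connective of $\{\mathbf{0},\mathbf{1},\neg,\wedge,\vee,\rightarrow,\Yleft\}$ is assembled solely from the order–theoretic G\"odel operations $\min,\max,\rightarrow_{\mathsf{G}},\Yleft_{\mathsf{G}}$ together with the coordinate swap used by $\neg$; since $h$ is an order automorphism fixing $0$ and $1$ it commutes with each of these (e.g.\ $h(a\rightarrow_{\mathsf{G}}b)=h(a)\rightarrow_{\mathsf{G}}h(b)$, and likewise for $\Yleft_{\mathsf{G}},\min,\max$), and using the \emph{same} $h$ on both coordinates is exactly what makes $H$ commute with $\neg$. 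The constants are fixed because $h(0)=0$ and $h(1)=1$.

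For the reduction, note that $(x,y)\neq(0,1)$ means $x>0$ or $y<1$. If $x>0$, set $\varepsilon\coloneqq x$, so $v_1(\phi)\geq\varepsilon$ for all $v$. If instead $y<1$, apply Proposition~\ref{no1no0}: for every $v$ one has $v^*_1(\phi)=1-v_2(\phi)\geq 1-y$, and since $v\mapsto v^*$ is an involution on valuations this reads $v_1(\phi)\geq 1-y$ for all $v$; set $\varepsilon\coloneqq 1-y>0$. Now suppose, for contradiction, that some $v'$ satisfies $v'_1(\phi)=c<1$. If $c<\varepsilon$ we already contradict the bound, so assume $\varepsilon\leq c<1$, whence $c\in(0,1)$; then choose a piecewise–linear increasing bijection $h$ fixing $0$ and $1$ with $h(c)<\varepsilon$. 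The invariance lemma gives a valuation $(v')^{h}$ with $((v')^{h})_1(\phi)=h(c)<\varepsilon$, contradicting $v_1(\phi)\geq\varepsilon$. Hence $v_1(\phi)=1$ for all $v$. Applying Proposition~\ref{no1no0} once more, $v^*_1(\phi)=1-v_2(\phi)$ together with $v^*_1(\phi)=1$ yields $v_2(\phi)=0$ for all $v$; therefore $v(\phi)=(1,0)$ for every $v$.

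The main obstacle is the invariance lemma, and within it the bookkeeping needed to be sure that every connective---in particular the co-implication $\Yleft$, which like $\rightarrow$ is built from $\rightarrow_{\mathsf{G}}$, $\Yleft_{\mathsf{G}}$ and the coordinate swap across the two coordinates---is genuinely order–theoretic, so that an arbitrary order automorphism of $[0,1]$ lifts coordinatewise through the whole formula. Once this commutation is secured, the freedom to rescale G\"odel values downward does all the remaining work.
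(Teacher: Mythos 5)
Your proof is correct, but it takes a genuinely different route from the paper's. The paper first reduces every formula to $\neg$-negation normal form, then proves a one-dimensional bound for G\"odel formulas (a formula evaluated below $1$ is bounded above by the maximum of its variables' values, and the rescaled valuation $v'(p)=v(p)/n$ pushes its value below any prescribed positive threshold), and lifts this to two dimensions by treating literals as fresh atoms; the conflation symmetry of Proposition~\ref{no1no0} then handles the case split between $v'_1(\phi)\neq 1$ and $v'_1(\phi)=1$, exactly as in your final steps. Your argument replaces the NNF machinery, the literal trick, and the max-of-variables bound with a single invariance lemma: an order automorphism $h$ of $[0,1]$ fixing the endpoints, applied to \emph{both} coordinates, commutes with every connective of $\{\mathbf{0},\mathbf{1},\neg,\wedge,\vee,\rightarrow,\Yleft\}$ (the point about using the same $h$ on both coordinates being what makes $\neg$ transparent is exactly right, and it is what lets you skip NNF entirely). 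You also make explicit a step the paper leaves somewhat implicit, namely extracting a uniform lower bound $\varepsilon$ on the first coordinate from $(x,y)\neq(0,1)$ via the involution $v\mapsto v^*$. What each approach buys: yours isolates a clean, reusable algebraic fact (automorphisms of the twisted product preserve validity, the standard ``G\"odel logic is a logic of order'' phenomenon) and needs no syntactic preprocessing; the paper's yields a slightly stronger structural by-product (the explicit bound of $v_1(\phi)$ by the maximum over $\{v_1(p)\}\cup\{v_2(p)\}$) at the cost of the NNF detour. Both proofs lean equally on Proposition~\ref{no1no0} to transfer information between the two coordinates.
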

The last three propositions show that in contrast to $\mathsf{L}^2(\rightarrow)$, the changing of the set of designated values does not change the set of valid formulas as long as the set remains a filter\footnote{Notice that $p\vee\neg p$ would be valid for $D=[0,1]\odot[0,1]\setminus\{(0,1)\}$. But $D$ is not a~filter.} on $[0,1]\odot[0,1]$ generated by a single point. However, while the sets of tautologies remain the same, the entailment relation can be made paraconsistent. Indeed, it suffices to choose any prime $(x,y)^\uparrow$ and the entailment ceases to be explosive in the following sense: $p,\neg p\nvDash_{\mathsf{G}^2_{(x,y)}}q$.

Furthermore, the propositions have an important corollary which simplifies the construction of the tableaux proofs.
\begin{corollary}
\label{cor:not1=not0}
$v(\phi)=(1,0)$ for any $v$ iff $v'_1(\phi)=1$ for any $v'$.
\end{corollary}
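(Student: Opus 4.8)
The plan is to prove the two implications separately, the right-to-left direction resting entirely on the conflation symmetry established in Proposition~\ref{no1no0}.

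The left-to-right direction is immediate: if $v(\phi)=(1,0)$ for every valuation $v$, then in particular $v_1(\phi)=1$ for every $v$, since $v_1(\phi)$ is by definition the first coordinate of $v(\phi)$.

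For the converse, I would assume $v'_1(\phi)=1$ for every valuation $v'$ and fix an arbitrary $v$, writing $v(\phi)=(a,b)$. Applying the hypothesis to $v$ itself yields $a=1$, so it remains only to show $b=0$. I would then pass to the conflated valuation $v^*$ defined by $v^*(p)=(1-y,1-x)$ whenever $v(p)=(x,y)$; this is a legitimate valuation since $[0,1]$ is closed under $x\mapsto 1-x$. By Proposition~\ref{no1no0}, $v(\phi)=(a,b)$ gives $v^*(\phi)=(1-b,1-a)=(1-b,0)$. Applying the hypothesis now to $v^*$, the first coordinate of $v^*(\phi)$ must equal $1$, that is $1-b=1$, whence $b=0$. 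Therefore $v(\phi)=(1,0)$, and since $v$ was arbitrary the claim follows.

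There is no genuine obstacle here: beyond unfolding the definition of $v_1$, the sole ingredient is the symmetry of Proposition~\ref{no1no0}, which converts the constraint on the first coordinate of the conflated valuation into a constraint on the second coordinate of the original one. The only point requiring a moment's care is verifying that $v^*$ is a well-defined valuation so that the hypothesis may legitimately be applied to it, which is immediate.
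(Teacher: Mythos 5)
Your proof is correct and uses exactly the same key ingredient as the paper: the conflation symmetry of Proposition~\ref{no1no0}, applied to the valuation $v^*$ to transfer the hypothesis on first coordinates into the needed constraint on the second coordinate. The only difference is presentational --- you argue the right-to-left direction directly while the paper argues by contraposition --- so the two proofs are essentially identical.
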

\section{Tableaux}
\label{sec:tableaux}
First, we give a general definition of a constraint tableaux, then in sections~\ref{L2constraint} and~\ref{G2constraint}, we introduce tableaux for $\Luk^2$'s and $\mathsf{G}^2$'s.

\begin{definition}[Constraint tableaux]
\label{def:constraint:tableaux} Let $\mathsf{Label}$ be a set of labels and $\mathcal{L}$ a set of formulas. A \emph{constraint} is one of these three expressions:
\begin{itemize}[noitemsep,topsep=2pt]
\item \emph{Labelled formulas} of the form ${L}:\phi$ with ${L} \in \mathsf{Label}$ and $\phi \in \mathcal{L}$,
\item \emph{Numerical constraints} of the form $c\leq d$ or $c<d$ with $c,d\in[0,1]$,
\item \emph{Formulaic constraints} of the form ${L}:\phi\leqslant{L}':\phi'$ or ${L}:\phi<{L}':\phi'$  with $L,L'\in\mathsf{Label}$ and $\phi,\phi'\in\mathcal{L}$.
\end{itemize}
A \emph{constraint tableau} is a~downward branching tree each branch of which is a non-empty set of constraints.
Each branch $\mathcal{B}$ can be extended by applications of a given set of rules. If no rule application adds new entries to $\mathcal{B}$, it is called \emph{complete}.
\end{definition}
As expected, in labelled formulas, $L$ is some \emph{set of values}. Thus, the intended interpretation of $L:\phi$ is ‘$\phi$ has some value from $L$’. In formulaic constraints, $L$ and $L'$ are \emph{components of $\phi$'s valuation}. Hence, the intended interpretation of ${L}:\phi\leqslant{L}':\phi'$ is ‘the component of $\phi$'s valuation denoted by $L$ is less or equal to the component of $\phi'$'s valuation denoted by $L'$’. The detailed interpretations of all types of entries for each tableau calculus are given in definitions~\ref{def:L2constrainttableau} and~\ref{def:G2constrainttableau} as well as remarks~\ref{rem:TL2meaning} and~\ref{rem:TG2meaning}.

Henceforth, we only state the rules and the closure conditions for branches. In what follows, we identify a branch with the set of entries that appear at some point on the branch.

Since our logics are hybrids between $\FDE$ and \L{} (or $\mathsf{G}$), we can combine the constraint tableaux framework with the $\FDE$-tableaux by D'Agostino~\cite{DAgostino1990}. In particular, it means that we  use two kinds of labelled formulas and formulaic constraints: those that concern the left coordinate (evidence for the statement) and those that concern the right coordinate (evidence against the statement).
\subsection{Constraint tableaux for $\Luk^2$}\label{L2constraint}
\begin{definition}[Constraint tableau for $\Luk^2$ --- $\mathcal{T}\left(\Luk^2_{(x,y)}\right)$]\label{def:L2constrainttableau}
Branches contain \emph{labelled formulas} of the form $\phi\leqslant_1i$, $\phi\leqslant_2 i$, $\phi\geqslant_1i$, or $\phi\geqslant_2i$, and \emph{numerical constraints} of the form $i\leq j$ with $i,j \in [0,1]$. We call \emph{atomic labelled formulas} labelled formulas where $\phi \in \mathsf{Prop}$.

Each branch can be extended by an application of one of the  rules in Figure~\ref{L2rulesfigure} where $i,j \in [0,1]$.
\begin{figure}
\centering
\[\begin{array}{cccc}
\mathbf{0}\leqslant_1\dfrac{\mathbf{0}\leqslant_1i}{0\leq i}\quad \quad
&
\mathbf{0}\leqslant_2\dfrac{\mathbf{0}\leqslant_2i}{1\leq i}\quad\quad
&
\mathbf{0}\geqslant_1\dfrac{\mathbf{0}\geqslant_1i}{0\geq i}
\quad\quad
&
\mathbf{0}\geqslant_2\dfrac{\mathbf{0}\geqslant_2i}{1\geq i}
\\
&\\
\neg\leqslant_1\dfrac{\neg\phi\leqslant_1i}{\phi\leqslant_2i}\quad \quad
&
\neg\leqslant_2\dfrac{\neg\phi\leqslant_2i}{\phi\leqslant_1i}\quad\quad
&
\neg\geqslant_1\dfrac{\neg\phi\geqslant_1i}{\phi\geqslant_2i}
\quad\quad
&
\neg\geqslant_2\dfrac{\neg\phi\geqslant_2i}{\phi\geqslant_1i}
\end{array}\]
\[\begin{array}{cc}
\rightarrow\leqslant_1\dfrac{\phi_1\rightarrow\phi_2\leqslant_1i}{i\geq1\left|\begin{matrix}\phi_1\geqslant_11-i+j\\\phi_2\leqslant_1j\\j\leq i\end{matrix}\right.}
\quad \quad
&
\rightarrow\leqslant_2\dfrac{\phi_1\rightarrow\phi_2\leqslant_2i}{\begin{matrix}\phi_1\geqslant_2j\\\phi_2\leqslant_2i+j\end{matrix}}\\
&\\
\rightarrow\geqslant_1\dfrac{\phi_1\rightarrow\phi_2\geqslant_1 i}{\begin{matrix}\phi_1\leqslant_11-i+j\\\phi_2\geqslant_1j\end{matrix}}
\quad \quad
&
\rightarrow\geqslant_2\dfrac{\phi_1\rightarrow\phi_2\geqslant_2i}{i\leq0\left|\begin{matrix}\phi_1\leqslant_2j\\\phi_2\geqslant_2i+j\\j\leq 1-i\end{matrix}\right.}\\
&\\
\weakrightarrow\leqslant_1\dfrac{\phi_1\weakrightarrow\phi_2\leqslant_1i}{i\geq1\left|\begin{matrix}\phi_1\geqslant_11-i+j\\\phi_2\leqslant_1j\\j\leq  i\end{matrix}\right.}&\weakrightarrow\leqslant_2\dfrac{\phi_1\weakrightarrow\phi_2\leqslant_2i}{\begin{matrix}\phi_1\leqslant_2i+j\\\phi_2\leqslant_11-j\end{matrix}}\\
&\\
\weakrightarrow\geqslant_1\dfrac{\phi_1\weakrightarrow\phi_2\geqslant_1 i}{\begin{matrix}\phi_1\leqslant_11-i+j\\\phi_2\geqslant_1j\end{matrix}}&\weakrightarrow\geqslant_2\dfrac{\phi_1\weakrightarrow\phi_2\geqslant_2i}{i\leq0\left|\begin{matrix}\phi_1\geqslant_2i+j\\\phi_2\geqslant_11-j\\j\leq 1-i\end{matrix}\right.}\\
&\\
\wedge\leqslant_1\dfrac{\phi_1\wedge\phi_2\leqslant_1i}{\phi_1\leqslant_1i\mid\phi_2\leqslant_1i}&\wedge\leqslant_2\dfrac{\phi_1\wedge\phi_2\leqslant_2i}{\begin{matrix}\phi_1\leqslant_2i\\\phi_2\leqslant_2i\end{matrix}}\\
&\\
\wedge\geqslant_1\dfrac{\phi_1\wedge\phi_2\geqslant_1i}{\begin{matrix}\phi_1\geqslant_1i\\\phi_2\geqslant_1i\end{matrix}}&\wedge\geqslant_2\dfrac{\phi_1\wedge\phi_2\geqslant_2i}{\phi_1\geqslant_2i\mid\phi_2\geqslant_2i}\\
&\\
\vee\leqslant_1\dfrac{\phi_1\vee\phi_2\leqslant_1i}{\begin{matrix}\phi_1\leqslant_1i\\\phi_2\leqslant_1i\end{matrix}}&\vee\leqslant_2\dfrac{\phi_1\vee\phi_2\leqslant_2i}{\phi_1\leqslant_2i\mid\phi_2\leqslant_2i}\\
&\\
\vee\geqslant_1\dfrac{\phi_1\vee\phi_2\geqslant_1i}{\phi_1\geqslant_1i\mid\phi_2\geqslant_1i}&\vee\geqslant_2\dfrac{\phi_1\vee\phi_2\geqslant_2i}{\begin{matrix}\phi_1\geqslant_2i\\\phi_2\geqslant_2i\end{matrix}}
\end{array}\]
\caption{Rules of $\mathcal{T}\left(\Luk^2_{(x,y)}\right)$. Vertical bars denote splitting of the branch.}
\label{L2rulesfigure}
\end{figure}
Let $i$'s be in $[0,1]$ and $x$'s be  variables ranging over the real interval $[0,1]$. We define the translation $\tau$ from labelled formulas to linear inequalities as follows:
$$\tau(\phi\!\leqslant_1\!i)=x_\phi^L\!\leq\!i; \; \tau(\phi\!\geqslant_1\!i)=x_\phi^L\!\geq\!i; \; \tau(\phi\!\leqslant_2\!i)=x_\phi^R\leq i; \; \tau(\phi\!\geqslant_2\!i)=x_\phi^R\!\geq\!i$$
Let $\bullet\in\{\leqslant_1,\geqslant_1\}$ and $\circ\in\{\leqslant_2,\geqslant_2\}$. A tableau branch
$$\mathcal{B}=\{\phi_1\circ i_1,\ldots,\phi_m\circ i_m,\phi'_1\bullet j_1,\ldots,\phi'_n\bullet j_n,k_1\leq l_1,\ldots,k_q\leq l_q\}$$
is \emph{closed} if the system of inequalities
\[\tau(\phi_1\circ i_1),\ldots,\tau(\phi_m\circ i_m),\tau(\phi'_1\bullet j_1),\ldots,\tau(\phi'_n\bullet j_n),k_1\leq l_1,\ldots,k_q\leq l_q\]
does not have solutions. Otherwise, $\mathcal{B}$ is \emph{open}. A tableau is \emph{closed} if all its branches are closed.

$\phi$ has a \emph{$\mathcal{T}\left(\Luk^2_{(x,y)}\right)$ proof} if the tableaux beginning with $\{\phi\leqslant_1c, c<x\}$ and $\{\phi\geqslant_2d,d>y\}$ are both closed.
\end{definition}
\begin{remark}[How to interpret the rules of $\mathcal{T}\left(\Luk^2_{(x,y)}\right)$?]\label{rem:TL2meaning} Consider for instance the rule $\rightarrow\leqslant_2$. It's meaning is: $v_2(\phi_1\rightarrow\phi_2)\leq  i$ iff there is $j \in [0,1]$ s.t. 
$v_2(\phi_1)\geq  j$ and $v_2(\phi_2)\leq  i+j$. While rule $\wedge\!\!\leqslant_1$ means 
$v_1(\phi_1\wedge\phi_2)\leq  i$ iff 
either $v_1(\phi_1)\leq  i$ or $v_1(\phi_2)\leq  i$.
\end{remark}
To prove completeness and soundness, we need the following definitions.
\begin{definition}[Satisfying valuation of a branch]
Let $v$ be a valuation and $\mathrm{k}\in\{1,2\}$. 
$v$~\emph{satisfies} a labelled formula $\phi\leqslant_\mathrm{k}i$ (resp.\ $\phi\geqslant_\mathrm{k}i$) iff $v_\mathrm{k}(\phi)\leq  i$ (resp.\ $v_\mathrm{k}(\phi)\geq  i$).
$v$~\emph{satisfies} a branch $\mathcal{B}$ iff $v$~\emph{satisfies} any labelled formula in $\mathcal{B}$.
A~branch $\mathcal{B}$ is \emph{satisfiable} iff there is a valuation which satisfies it. 
\end{definition}
\begin{theorem}[Soundness and completeness]\label{tableauxsoundness}\label{tableauxcompleteness}
$\phi$ is $\Luk^2_{(x,y)}(\rightarrow)$-valid (resp.\ $\Luk^2_{(x,y)}(\weakrightarrow)$-valid) iff there is a $\mathcal{T}\left(\Luk^2_{(x,y)}\right)$ proof for it.
\end{theorem}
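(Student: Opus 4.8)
The plan is to prove soundness and completeness separately, each via a standard ``local faithfulness'' argument for the tableau rules, tied together by the proof-search convention. The crucial bridge is the translation $\tau$ into linear inequalities and the definition of a $\mathcal{T}\left(\Luk^2_{(x,y)}\right)$ proof as the closure of the two tableaux starting from $\{\phi\leqslant_1c,\,c<x\}$ and $\{\phi\geqslant_2d,\,d>y\}$. First I would record the key semantic observation that, by Definition~\ref{def:rectangle:validity}, $\phi$ is $\Luk^2_{(x,y)}$-valid iff for every $v$ we have $v_1(\phi)\geq x$ \emph{and} $v_2(\phi)\leq y$; equivalently, $\phi$ fails to be valid iff there is a $v$ with $v_1(\phi)<x$ or with $v_2(\phi)>y$. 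This is exactly what the two initial branches encode: the first seeks a countermodel on the left coordinate, the second on the right coordinate.

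For \textbf{soundness} (if there is a proof then $\phi$ is valid, contrapositively) I would prove the standard invariant: each tableau rule is \emph{satisfiability-preserving downward}, i.e.\ if a valuation $v$ satisfies the premise of a rule (in the sense of the satisfaction relation just defined), then $v$ satisfies at least one of the rule's conclusion branches. This is a case check over the finitely many rules in Figure~\ref{L2rulesfigure}, and each case is a direct unfolding of the semantic clauses in Definition~\ref{def:semantics:L2}; the branching rules (e.g.\ $\wedge\leqslant_1$, or the $\rightarrow\leqslant_1$ rule with its $i\geq 1$ alternative) correspond precisely to the disjunctive structure of $\min$, $\max$, and the truncation in $\ominus$ and $\&$. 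Combined with the fact that a closed branch is \emph{unsatisfiable} (its associated linear system under $\tau$ has no solution, which by construction means no $v$ satisfies it), the invariant gives: if the tableau starting from $\{\phi\leqslant_1 c,\,c<x\}$ closes, then no $v$ satisfies that initial constraint, i.e.\ no $v$ has $v_1(\phi)<x$, so $v_1(\phi)\geq x$ always; dually for the second tableau and $v_2(\phi)\leq y$. Hence a proof forces validity.

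For \textbf{completeness} (if $\phi$ is valid then a proof exists, again contrapositively) I would show that any \emph{open complete} branch yields a satisfying valuation. Take a complete branch $\mathcal{B}$ whose associated linear system is solvable; a solution assigns reals $x^L_p,x^R_p\in[0,1]$ to each atomic labelled formula, and I would \emph{define} $v(p)=(x^L_p,x^R_p)$ from such a solution. The core lemma is then that this $v$ satisfies every labelled formula on $\mathcal{B}$, proved by induction on formula complexity: completeness of $\mathcal{B}$ guarantees that every non-atomic labelled formula has had its rule applied, so the inductive hypothesis on the resulting shorter formulas, together with the corresponding semantic clause, forces the correct value for the compound. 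Applying this to the two starting tableaux: if the tableau from $\{\phi\leqslant_1 c,\,c<x\}$ does \emph{not} close, it has an open complete branch and hence a $v$ with $v_1(\phi)\leq c<x$, witnessing non-validity; similarly for the second tableau and $v_2(\phi)\geq d>y$. Contrapositively, validity yields closure of both tableaux, i.e.\ a proof. Note that because $v_1$ and $v_2$ are only coupled through $\neg$ (whose rules swap coordinates) and through the implication rules, the existential witnesses $j$ introduced in rules such as $\rightarrow\leqslant_2$ become existentially quantified variables in the linear system, which is exactly why solvability of the system is the right closure condition.

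The \textbf{main obstacle} I anticipate is the completeness induction for the implication connectives, specifically verifying that the Skolem-style witnesses $j$ appearing in rules like $\rightarrow\leqslant_2$, $\rightarrow\geqslant_2$, $\weakrightarrow\leqslant_2$, and $\weakrightarrow\geqslant_2$ are correctly recovered from a solution of the linear system, so that the reconstructed $v$ genuinely realizes the truncated operations $\ominus$ and $\&$ (e.g.\ confirming $v_2(\phi_1\rightarrow\phi_2)=v_2(\phi_2)\ominus v_2(\phi_1)$ rather than merely an inequality in the wrong direction). Here the precise matching of the two-sided rules $\leqslant_\mathrm{k}$ and $\geqslant_\mathrm{k}$ is what pins the value down exactly, and I would treat $\rightarrow$ and $\weakrightarrow$ uniformly since the proof depends only on the rules actually present in the chosen language (the statement covers both $\Luk^2_{(x,y)}(\rightarrow)$ and $\Luk^2_{(x,y)}(\weakrightarrow)$). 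A secondary, purely bookkeeping point is ensuring termination/finiteness of a complete branch so that ``open complete branch'' is well-defined; this follows because every rule strictly decreases formula complexity in its conclusions, so only finitely many labelled formulas (up to the numerical parameters, which are handled inside the linear system) can appear.
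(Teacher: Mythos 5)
Your proposal matches the paper's proof essentially step for step: soundness via downward satisfiability-preservation of each rule together with unrealizability of closed branches, and completeness by contraposition, extracting a valuation from a solution of the linear system on an open complete branch and verifying all labelled formulas by induction on formula structure (the paper's single worked case is precisely your anticipated hard spot, $\phi_1\rightarrow\phi_2\geqslant_2 i$). One small remark: since labelled formulas assert only inequalities, the induction never has to recover the exact value of $v_2(\phi_1\rightarrow\phi_2)$ from the witnesses $j$ — the one-sided estimate $\max(0,v_2(\phi_2)-v_2(\phi_1))\geq\max(0,i+j-j)=i$ suffices, so the obstacle you flag is milder than you fear.
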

\begin{proof}
The soundness follows from the fact that no closed branch is realisable and that if a premise of the rule is realisable, then all labelled formulas are satisfied in at least one of the conclusions.

To show completeness, we proceed by contraposition. We need to show that complete open branches are satisfiable.

Assume that $\mathcal{B}$ is a complete open branch. We construct the satisfying valuation as follows. Let $*\in\{\leqslant_1,\geqslant_1,\leqslant_2,\geqslant_2\}$ and $p_1, \ldots, p_m$ be the propositional variables appearing in the atomic labelled formulas in $\mathcal{B}$.
Let $\{p_1*i_1,\ldots,p_m*i_n\}$ and $\{k_1\leq  l_1,\ldots,k_q\leq  l_q\}$ be the sets of all atomic labelled formulas and all numerical constraints  in $\mathcal{B}$. Notice that one variable might appear in many atomic labelled formulas, hence we might have $m \neq n$. Since $\mathcal{B}$ is complete and open, the following system of linear inequalities over the set of variables $\{x_{p_1}^L, x_{p_1}^R, \ldots, x_{p_m}^L, x_{p_m}^R  \}$ must have at least one solution under the constrains listed:
\[ \tau(p_1*i_1),\ldots,\tau(p_m*i_n), k_1\leq  l_1,\ldots,k_q\leq  l_q.\]
Let $c=(c^L_1,c^R_1,\ldots,c^L_m,c^R_m)$
be a solution to the above system of inequalities such that $c^L_j$ (resp.\ $c^R_j$) is the value of $x_{p_j}^L$ (resp.\ $x_{p_j}^R$). Define the valuation $v$ as follows: $v(p_j)=(c^L_j,c^R_j)$.

It remains to show by induction on $\phi$ that all formulas present at $\mathcal{B}$ are satisfied by $v$. The basis case of variables holds by construction of $v$. We  consider only the most instructive case of $\phi_1\rightarrow\phi_2\geqslant_2i$ as the other cases are straightforward.

Assume that $\phi_1\rightarrow\phi_2\geqslant_2i\in\mathcal{B}$. Then, by completeness of $\mathcal{B}$, either $i\leq0\in\mathcal{B}$, in which case, $\phi_1\rightarrow\phi_2\geqslant_2i$ is trivially satsified, or  $\phi_1\leqslant_2j,\phi_2\geqslant_2i+j\in\mathcal{B}$. Furthermore, by the induction hypothesis, $v$ satisfies $\phi_1\leqslant_2j$ and $\phi_2\geqslant_2i+j$, and we also have that $j\leq1-i$.  Now, to show that $v$ satisfies $\phi_1\rightarrow\phi_2\geqslant_2i$, recall from semantics that $v_2(\phi_1\rightarrow\phi_2)=\max(0,v_2(\phi_2)-v_2(\phi_1))$.

Now, we have
\begin{align*}
\max(0,v_2(\phi_2)-v_2(\phi_1))&\geq \max(0,i+j-j)=\max(0,i)= i
\end{align*}
as desired.

The cases of other connectives can be tackled in a similar fashion.
\hfill $\Box$ 
\end{proof}
\subsection{Constraint tableaux for $\mathsf{G}^2$}\label{G2constraint}
\begin{definition}[Constraint tableaux for $\mathsf{G}^2$ --- $\mathcal{T}(\mathsf{G}^2)$]\label{def:G2constrainttableau}
Let $\lesssim~\in\{<,\leqslant\}$ and $\gtrsim~\in\{<,\leqslant\}$.
Branches contain:
\begin{itemize}[noitemsep,topsep=2pt]
\item \emph{formulaic constraints} of the form $\mathbf{x}:\phi\lesssim\mathbf{x}':\phi'$ 
with $\mathbf{x}\in\{1,2\}$;
\item \emph{numerical constraints} of the form ${c}\lesssim{c}'$ with ${c},{c}'\in\{{1},{0}\}$;
\item \emph{labelled formulas} of the form $\mathbf{x}:\phi*{c}$ with $*\in\{\lesssim,\gtrsim\}$.
\end{itemize}
We abbreviate all these types of entries with $\mathfrak{X}\lesssim\mathfrak{X}'$. Each branch can be extended by an application of one of the  rules in Figure~\ref{G2rulesfigure} where 
$\mathbf{c}\neq\mathbf{c}'$, $c\neq c'$, $\mathbf{c},\mathbf{c}'\in\{\mathbf{0},\mathbf{1}\}$  and $c,c'\in\{0,1\}$.

\begin{figure}
\centering
\[\begin{array}{cccc}
\mathbf{c}_1\!\lesssim\!\dfrac{1\!:\!\mathbf{c}\!\lesssim\!\mathfrak{X}}{c\!\lesssim\!\mathfrak{X}}
& \qquad
\mathbf{c}_2\!\lesssim\!\dfrac{2\!:\!\mathbf{c}\!\lesssim\!\mathfrak{X}}{c'\!\lesssim\!\mathfrak{X}}
& \qquad
\mathbf{c}_1\!\gtrsim\!\dfrac{1\!:\!\mathbf{c}\!\gtrsim\!\mathfrak{X}}{c\!\gtrsim\!\mathfrak{X}}
& \qquad
\mathbf{c}_2\!\gtrsim\!\dfrac{2\!:\!\mathbf{c}\!\gtrsim\!\mathfrak{X}}{c'\!\gtrsim\!\mathfrak{X}}\\
&\\
\neg_1\!\lesssim\!\dfrac{1\!:\!\neg\phi\!\lesssim\!\mathfrak{X}}{2\!:\!\phi\!\lesssim\!\mathfrak{X}}
& \qquad
\neg_2\!\lesssim\!\dfrac{2\!:\!\neg\phi\!\lesssim\!\mathfrak{X}}{1\!:\!\phi\!\lesssim\!\mathfrak{X}}
& \qquad
\neg_1\!\gtrsim\!\dfrac{1\!:\!\neg\phi\!\gtrsim\!\mathfrak{X}}{2\!:\!\phi\!\gtrsim\!\mathfrak{X}}
& \qquad
\neg_2\!\gtrsim\!\dfrac{2\!:\!\neg\phi\!\gtrsim\!\mathfrak{X}}{1\!:\!\phi\!\gtrsim\!\mathfrak{X}}
\end{array}\]
\smallskip
\[\begin{array}{cccc}
\wedge_1\!\gtrsim\!\dfrac{1\!:\!\phi\!\wedge\!\phi'\!\gtrsim\!\mathfrak{X}}{\begin{matrix}1\!:\!\phi\!\gtrsim\!\mathfrak{X}\\1\!:\!\phi'\!\gtrsim\!\mathfrak{X}\end{matrix}}
&~
\wedge_2\!\lesssim\!\dfrac{2\!:\!\phi\!\wedge\!\phi'\!\lesssim\!\mathfrak{X}}{\begin{matrix}2\!:\!\phi\!\lesssim\!\mathfrak{X}\\2\!:\!\phi'\!\lesssim\!\mathfrak{X}\end{matrix}}
&~
\vee_1\!\lesssim\!\dfrac{1\!:\!\phi\!\vee\!\phi'\!\lesssim\!\mathfrak{X}}{\begin{matrix}1\!:\!\phi\!\lesssim\!\mathfrak{X}\\1\!:\!\phi'\!\lesssim\!\mathfrak{X}\end{matrix}}
&~
\vee_2\!\gtrsim\!\dfrac{2\!:\!\phi\!\vee\!\phi'\!\gtrsim\!\mathfrak{X}}{\begin{matrix}2\!:\!\phi\!\gtrsim\!\mathfrak{X}\\2\!:\!\phi'\!\gtrsim\!\mathfrak{X}\end{matrix}}
\end{array}\]
\[\begin{array}{cc}
\wedge_1\!\lesssim\!\dfrac{1\!:\!\phi\wedge\phi'\!\lesssim\!\mathfrak{X}}{1\!:\!\phi\!\lesssim\!\mathfrak{X}\mid1\!:\!\phi'\!\lesssim\!\mathfrak{X}}
&\quad
\wedge_2\!\gtrsim\!\dfrac{2\!:\!\phi\wedge\phi'\!\gtrsim\!\mathfrak{X}}{2\!:\!\phi\!\gtrsim\!\mathfrak{X}\mid2\!:\!\phi'\!\gtrsim\!\mathfrak{X}}\\
&\\
\vee_1\!\gtrsim\!\dfrac{1\!:\!\phi\vee\phi'\!\gtrsim\!\mathfrak{X}}{1\!:\!\phi\!\gtrsim\!\mathfrak{X}\mid1\!:\!\phi'\!\gtrsim\!\mathfrak{X}}
& \qquad
\vee_2\!\lesssim\!\dfrac{2\!:\!\phi\vee\phi'\!\lesssim\!\mathfrak{X}}{2\!:\!\phi\!\lesssim\!\mathfrak{X}\mid2\!:\!\phi'\!\lesssim\!\mathfrak{X}}\\
\end{array}\]
\smallskip
\[\begin{array}{ccc}
\rightarrow_1\!\leqslant\!\dfrac{1\!:\!\phi\rightarrow\phi'\!\leqslant\!\mathfrak{X}}{\mathfrak{X}\!\geqslant\!{1}\left|\begin{matrix}\mathfrak{X}\!<\!{1}\\1\!:\!\phi'\!\leqslant\!\mathfrak{X}\\1\!:\!\phi\!>\!1\!:\!\phi'\end{matrix}\right.}&\rightarrow_1\!\gtrsim\!\dfrac{1\!:\!\phi\rightarrow\phi'\!\gtrsim\!\mathfrak{X}}{1\!:\!\phi\!\leqslant\!1\!:\!\phi'\mid1\!:\!\phi'\!\gtrsim\!\mathfrak{X}}&\rightarrow_1\!<\!\dfrac{1\!:\!\phi\rightarrow\phi'\!<\!\mathfrak{X}}{\begin{matrix}1\!:\!\phi'\!<\!\mathfrak{X}\\1\!:\!\phi\!>\!1\!:\!\phi'\end{matrix}}\\
&\\
\rightarrow_2\!\lesssim\!\dfrac{2\!:\!\phi\rightarrow\phi'\!\lesssim\!\mathfrak{X}}{2\!:\!\phi'\!\leqslant\!2\!:\!\phi\mid2\!:\!\phi'\!\lesssim\!\mathfrak{X}}&\rightarrow_2\!\geqslant\!\dfrac{2\!:\!\phi\rightarrow\phi'\!\geqslant\!\mathfrak{X}}{\mathfrak{X}\!\leqslant\!{0}\left|\begin{matrix}\mathfrak{X}\!>\!{0}\\2\!:\!\phi'\!\geqslant\!\mathfrak{X}\\2\!:\!\phi'\!>\!2\!:\!\phi\end{matrix}\right.}&\rightarrow_2\!>\!\dfrac{2\!:\!\phi\rightarrow\phi'\!>\!\mathfrak{X}}{\begin{matrix}2\!:\!\phi'\!>\!\mathfrak{X}\\2\!:\!\phi'\!>\!2\!:\!\phi\end{matrix}}\\
&\\
\Yleft_1\!\lesssim\!\dfrac{1\!:\!\phi\!\Yleft\!\phi'\!\lesssim\!\mathfrak{X}}{1\!:\!\phi\!\leqslant\!1\!:\!\phi'\mid1\!:\!\phi\!\lesssim\!\mathfrak{X}}&\Yleft_1\!>\!\dfrac{1\!:\!\phi\!\Yleft\!\phi'\!>\!\mathfrak{X}}{\begin{matrix}1\!:\!\phi\!>\!\mathfrak{X}\\1\!:\!\phi\!>\!1\!:\!\phi'\end{matrix}}&\Yleft_1\!\geqslant\!\dfrac{1\!:\!\phi\!\Yleft\!\phi'\!\geqslant\!\mathfrak{X}}{\mathfrak{X}\!\leqslant\!{0}\left|\begin{matrix}\mathfrak{X}\!>\!{0}\\1\!:\!\phi\!\geqslant\!\mathfrak{X}\\1\!:\!\phi\!>\!1\!:\!\phi'\end{matrix}\right.}\\
&\\
\Yleft_2\!\gtrsim\!\dfrac{2\!:\!\phi\Yleft\phi'\!\gtrsim\!\mathfrak{X}}{2\!:\!\phi\!\gtrsim\!\mathfrak{X}\mid2\!:\!\phi'\!\leqslant\!2\!:\!\phi}&\Yleft_2\!\leqslant\!\dfrac{1\!:\!\phi\Yleft\phi'\!\leqslant\!\mathfrak{X}}{\mathfrak{X}\!\geqslant\!{1}\left|\begin{matrix}\mathfrak{X}\!<\!{1}\\2\!:\!\phi\!\leqslant\!\mathfrak{X}\\2\!:\!\phi'\!>\!2\!:\!\phi\end{matrix}\right.}&\Yleft_2\!<\!\dfrac{2\!:\!\phi\Yleft\phi'\!<\!\mathfrak{X}}{\begin{matrix}2\!:\!\phi\!<\!\mathfrak{X}\\2\!:\!\phi\!<\!2\!:\!\phi'\end{matrix}}\\
&\\
\weakrightarrow_1\!\leqslant\!\dfrac{1\!:\!\phi\!\weakrightarrow\!\phi'\!\leqslant\!\mathfrak{X}}{\mathfrak{X}\!\geqslant\!{1}\left|\begin{matrix}\mathfrak{X}\!<\!{1}\\1\!:\!\phi'\!\leqslant\!\mathfrak{X}\\1\!:\!\phi\!>\!1\!:\!\phi'\end{matrix}\right.}&\weakrightarrow_1\!\gtrsim\!\dfrac{1\!:\!\phi\!\weakrightarrow\!\phi'\!\gtrsim\!\mathfrak{X}}{1\!:\!\phi\!\leqslant\!1\!:\!\phi'\mid1\!:\!\phi'\!\gtrsim\!\mathfrak{X}}&\weakrightarrow_1\!<\!\dfrac{1\!:\!\phi\!\weakrightarrow\!\phi'\!<\!\mathfrak{X}}{\begin{matrix}1\!:\!\phi'\!<\!\mathfrak{X}\\1\!:\!\phi\!>\!1\!:\!\phi'\end{matrix}}
\end{array}\]
\smallskip
\[\begin{array}{cc}
\weakrightarrow_2\lesssim\dfrac{2\!:\!\phi\weakrightarrow\phi'\!\lesssim\!\mathfrak{X}}{1\!:\!\phi\!\lesssim\!\mathfrak{X}\mid2\!:\!\phi'\!\lesssim\!\mathfrak{X}}
& \qquad
\weakrightarrow_2\gtrsim\dfrac{2\!:\!\phi\weakrightarrow\phi'\!\gtrsim\!\mathfrak{X}}{\begin{matrix}1\!:\!\phi\!\gtrsim\!\mathfrak{X}\\2\!:\!\phi'\!\gtrsim\!\mathfrak{X}\end{matrix}}
\end{array}\]
\caption{Rules of $\mathcal{T}\left(\mathsf{G}^2\right)$. Vertical bars denote branching; $\mathbf{c}\neq\mathbf{c}'$, $c\neq c'$, $\mathbf{c},\mathbf{c}'\in\{\mathbf{0},\mathbf{1}\}$, $c,c'\in\{0,1\}$.}
\label{G2rulesfigure}
\end{figure}

A tableau's branch $\mathcal{B}$ is \emph{closed} iff at least one of the following conditions applies:
\begin{itemize}[noitemsep,topsep=2pt]
\item the transitive closure of $\mathcal{B}$ under $\lesssim$ contains $\mathfrak{X}<\mathfrak{X}$,
\item ${0}\geqslant{1}\in\mathcal{B}$ or $\mathfrak{X}>{1}\in\mathcal{B}$ or $\mathfrak{X}<{0}\in\mathcal{B}$.
\end{itemize}
A tableau is \emph{closed} iff all its branches are closed. We say that there is a \emph{tableau proof} of $\phi$ iff there is a closed tableau starting from $1:\phi<{1}$.
\end{definition}
\begin{remark}[Interpretation of constraints]\label{rem:TG2meaning}
Formulaic constraint $\mathbf{x}:\phi\leqslant\mathbf{x}':\phi'$ encodes the fact that $v_\mathbf{x}(\phi) \leq v_{\mathbf{x}'}(\phi')$, similarly labelled formula $\mathbf{x}:\phi \leqslant {c}$ encodes the fact that $v_\mathbf{x}(\phi) \leq c$.
\end{remark}
\begin{definition}[Satisfying valuation of a branch]\label{G2branchsatisfaction}
Let $\mathbf{x}, \mathbf{x}'\in\{1,2\}$. Branch $\mathcal{B}$ is \emph{satisfied} by a valuation $v$ iff
\begin{itemize}[noitemsep,topsep=2pt]
\item $v_\mathbf{x}(\phi)\leq v_{\mathbf{x}'}(\phi')$ for any $\mathbf{x}:\phi\leqslant\mathbf{x}':\phi'\in\mathcal{B}$ and
\item $v_\mathbf{x}(\phi)\leq c$ for any $\mathbf{x}:\phi\leqslant{c}\in\mathcal{B}$ s.t.\ ${c}\in\{{0},{1}\}$.
\end{itemize}
\end{definition}
\begin{theorem}[Soundness and completeness]\label{Gconstraintcompleteness}
$\phi$ is $\mathsf{G}^2$-valid iff it has a $\mathcal{T}(\mathsf{G}^2)$ proof.
\end{theorem}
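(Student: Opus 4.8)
The plan is to prove soundness and completeness separately, mirroring the structure of the $\Luk^2$ proof (Theorem~\ref{tableauxsoundness}) but adapting it to the purely comparative nature of the $\mathsf{G}^2$ constraints. The crucial preliminary observation is Corollary~\ref{cor:not1=not0}: since $v(\phi)=(1,0)$ for all $v$ iff $v_1'(\phi)=1$ for all $v'$, and since by Proposition~\ref{no1no0} and~\ref{prop:no:lower:limits:for:G:neg} the set of valid formulas does not depend on the choice of filter, $\mathsf{G}^2$-validity of $\phi$ reduces to showing $v_1(\phi)=1$ for every $v$. This is exactly why a tableau proof starts from the single constraint $1:\phi<\mathbf{1}$: we attempt to refute the existence of a valuation with $v_1(\phi)<1$, and a closed tableau witnesses that no such valuation exists, i.e.\ $v_1(\phi)=1$ always.

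For soundness I would argue by contraposition: if $\phi$ is not $\mathsf{G}^2$-valid, then by Corollary~\ref{cor:not1=not0} there is a valuation $v$ with $v_1(\phi)<1$, so $v$ satisfies the initial constraint $1:\phi<\mathbf{1}$ in the sense of Definition~\ref{G2branchsatisfaction}. The core lemma is that each rule preserves satisfiability downward: if a valuation satisfies the premise of a rule, it satisfies all constraints in at least one of the conclusion branches. This must be checked rule-by-rule against the semantic clauses of Definition~\ref{def:semantics:G2}; the branching rules (e.g.\ $\rightarrow_1\!\gtrsim$ reflecting the case split in the definition of $\rightarrow_\mathsf{G}$ as either $v_1(\phi_1)\leq v_1(\phi_2)$ or the residual value) correspond precisely to the disjunctive conditions in the G\"{o}del truth tables, and the $\neg$ rules to coordinate swapping. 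One then shows that a satisfiable branch cannot be closed: neither $\mathfrak{X}<\mathfrak{X}$ in the transitive closure nor $\mathbf{0}\geqslant\mathbf{1}$, $\mathfrak{X}>\mathbf{1}$, $\mathfrak{X}<\mathbf{0}$ can hold of a genuine valuation. Hence a satisfiable initial constraint yields an open tableau, so no proof exists.

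For completeness I would again use contraposition, showing that a complete open branch $\mathcal{B}$ is satisfiable; the existence of a satisfying valuation then contradicts closure and, tracing back, shows that if $\phi$ has no proof then $\phi$ is not valid. The construction of the valuation is the heart of the argument and differs essentially from the $\Luk^2$ case: because all constraints are \emph{comparative} ($\lesssim\in\{<,\leqslant\}$ between labelled formulas and the constants $0,1$), the open branch determines a strict-and-non-strict preorder on the set of labelled subformulas together with $0$ and $1$. Openness guarantees this preorder is consistent (no $\mathfrak{X}<\mathfrak{X}$ and no constant clash), so one can topologically sort the induced equivalence-and-order structure and embed it order-preservingly into $[0,1]$, sending $0\mapsto 0$, $1\mapsto 1$, respecting strict versus non-strict edges. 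Reading off the images of the atomic labelled formulas $1:p$ and $2:p$ defines $v(p)=(c^L_p,c^R_p)$. One then verifies by induction on formula complexity that this $v$ satisfies every constraint on $\mathcal{B}$, using completeness of the branch to ensure the relevant decomposition constraints are present.

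The main obstacle I anticipate is the completeness direction, specifically the order-embedding step: one must turn the transitive closure of $\mathcal{B}$ under $\lesssim$ into an actual assignment of reals that simultaneously respects every strict inequality as strict and every non-strict one as non-strict, while pinning the constants $0$ and $1$ to their correct endpoints. Unlike \L{}ukasiewicz, where a linear-programming feasibility argument over $[0,1]$ does all the work uniformly, here the delicate point is handling chains that mix $<$ and $\leqslant$ and that are anchored at $0$ or $1$ --- one needs enough "room" in $[0,1]$ to realize arbitrarily long strict chains, which is available precisely because the branch is finite, so only finitely many distinct values are required and they can be spread out in the interval. The inductive verification that the constructed $v$ respects the G\"{o}del semantics of $\rightarrow$, $\Yleft$, and $\weakrightarrow$ (each governed by a conditional definition) is routine but must be matched carefully against the corresponding branching and non-branching rules.
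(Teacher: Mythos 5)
Your proposal takes essentially the same route as the paper's proof: soundness by checking that each rule preserves satisfaction downward into at least one conclusion branch (so closed tableaux refute the initial constraint $1:\phi<{1}$, which by Corollary~\ref{cor:not1=not0} suffices for validity), and completeness by turning a complete open branch into a satisfying valuation via an order-preserving assignment of reals to the atomic labelled constraints, followed by the same upward induction that satisfaction of a rule's conclusions implies satisfaction of its premise. The only difference is one of presentation: where you invoke a generic topological-sort embedding of the branch's preorder into $[0,1]$, the paper pins variables constrained by $\geqslant{1}$ and $\leqslant{0}$ to the endpoints and realizes the embedding explicitly by the counting formula $v_\mathbf{x}(q_i)=|\{[\mathbf{x}':q']\mid[\mathbf{x}':q']\preceq[\mathbf{x}:q_i]\}|\,/\,2n$ on equivalence classes of non-strictly related labelled variables.
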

\begin{proof}
For soundness, we check that if the premise of the rule is satisfied, then so is at least one of its conclusions.

For completeness, we show that every complete open branch $\mathcal{B}$ is satisfiable.
We construct the satisfying valuation as follows. If $\mathbf{x}:p
\geqslant1\in\mathcal{B}$, we set $v_1(p)=1$. If $1:p\leqslant0\in\mathcal{B}$, we set $v_1(p)=0$. 
We do likewise for $2:p\leqslant0$ and $2:p\geqslant1$. To set the values of the remaining variables $q_1$, \ldots, $q_n$, we proceed as follows. Denote $\mathcal{B}^+$ the transitive closure of $\mathcal{B}$ under $\lesssim$ and let
\[[\mathbf{x}:q_i]=\left\{\mathbf{x}':q_j \; \left| \; \begin{matrix}(\mathbf{x}:q_i\leqslant\mathbf{x}':q_j\in\mathcal{B}^+\text{ or }
\mathbf{x}:q_i\geqslant\mathbf{x}':q_j\in\mathcal{B}^+)\\
\text{and}\\
\mathbf{x}:q_i<\mathbf{x}':q_j
\notin\mathcal{B}^+
\text{ and } \mathbf{x}:q_i>\mathbf{x}':q_j\notin\mathcal{B}^+
\end{matrix}\right.\right\}\]
It is clear that there are at most $2n$ $[\mathbf{x}:q_i]$'s since the only possible loop in $\mathcal{B}^+$ is $\mathbf{x}:r\leqslant\ldots\leqslant\mathbf{x}:r$, but in such a loop all elements belong to $[\mathbf{x}:r]$. We put $[\mathbf{x}:q_i]\preceq[\mathbf{x}':q_j]$ iff there are $\mathbf{x}:r\in[\mathbf{x}:q_i]$ and $\mathbf{x}':r'\in[\mathbf{x}':q_j]$ s.t. $\mathbf{x}:r\leqslant\mathbf{x}':r'\in\mathcal{B}^+$.

We now set the valuation of these variables as follows
\begin{equation}
v_\mathbf{x}(q_i)=\dfrac{|\{[\mathbf{x}':q']\mid[\mathbf{x}':q']\preceq[\mathbf{x}:q_i]\}|}{2n}\tag{$*$}\label{valuationchain}
\end{equation}
Thus, all constraints containing only variables are satisfied.

It remains to show that all other constraints are satisfied. For that, we prove that if at least one conclusion of the rule is satisfied, then so is the premise. We consider only the case of $\weakrightarrow_2\lesssim$. Let $1:\phi_1\lesssim\mathfrak{X}$ be satisfied. W.l.o.g., assume that $\mathfrak{X}=2:\psi$ and $\lesssim=<$. Thus, $v_1(\phi_1)<v_2(\psi)$. Recall that $v_2(\phi_1\rightarrow\phi_2)=\min(v_1(\phi_1),v_2(\phi_2))$. Hence, $v_2(\phi_1\weakrightarrow\phi_2)<v_2(\psi)$, and $2:\phi_1\weakrightarrow\phi_2<2:\psi$ is satisfied as desired. By the same reasoning, we have that if $2:\phi_2\lesssim\mathfrak{X}$ is satisfied, then so is $2:\phi_1\weakrightarrow\phi_2\lesssim\mathfrak{X}$.

The cases of other rules can be showed in the same fashion.
\hfill $\Box$ \end{proof}
\subsection{Applications}\label{L2G2tableauxapplications}
\begin{corollary}\label{xytableauxcomplexity}
Satisfiability for any $\Luk^2_{(x,y)}(\rightarrow)$ and $\Luk^2_{(x,1)}(\weakrightarrow)$ is $\mathcal{NP}$-com\-plete while their validities are $co\mathcal{NP}$-complete.
\end{corollary}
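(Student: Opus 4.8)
The plan is to read $\mathcal{NP}$-membership of satisfiability straight off the calculus $\mathcal{T}\left(\Luk^2_{(x,y)}\right)$, derive the $co\mathcal{NP}$ bound for validity by complementation, and obtain both hardness directions by inheriting them from \L{}ukasiewicz logic.

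\emph{Membership (the core).} A formula $\phi$ is $\Luk^2_{(x,y)}$-satisfiable iff some valuation has $v_1(\phi)\geq x$ and $v_2(\phi)\leq y$; by the reasoning behind Theorem~\ref{tableauxcompleteness}, this holds iff the tableau started from $\{\phi\geqslant_1 x,\ \phi\leqslant_2 y\}$ has an open complete branch. First I would prove the key combinatorial lemma that every branch has length polynomial in $|\phi|$: each rule decomposes one labelled formula into labelled formulas built from its \emph{immediate} subformulas, so along a branch each subformula occurrence is decomposed at most once, whence a complete branch carries $O(|\phi|)$ labelled formulas and $O(|\phi|)$ numerical constraints, and since each rule application introduces at most one fresh auxiliary variable $j$, there are only $O(|\phi|)$ such variables. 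A nondeterministic machine then guesses, at every branching rule, which disjunct to follow — a polynomially long sequence of choices — yielding a single branch. Through the translation $\tau$, that branch is a linear system over the reals with $O(|\phi|)$ variables and $O(|\phi|)$ inequalities whose coefficients are $\pm 1$ together with the fixed rationals $x,y$; the branch is open iff this system is feasible, and feasibility of a rational linear system is decidable in polynomial time. Hence satisfiability lies in $\mathcal{NP}$, and the same argument applies verbatim to $\Luk^2_{(x,1)}(\weakrightarrow)$, whose $\weakrightarrow$-rules are included in $\mathcal{T}\left(\Luk^2_{(x,y)}\right)$.

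\emph{Validity by complementation.} A formula $\phi$ fails to be $\Luk^2_{(x,y)}$-valid iff there is $v$ with $v_1(\phi)<x$ or $v_2(\phi)>y$; each disjunct is tested by the same guess-a-branch-then-check-feasibility procedure applied to the tableau started from $\{\phi\leqslant_1 c,\ c<x\}$, respectively $\{\phi\geqslant_2 d,\ d>y\}$ (the initial constraints of the proof clause in Definition~\ref{constrainttableau}). The strict numerical constraint is harmless, as feasibility of a rational linear system with strict inequalities is still polynomial. Thus non-validity is in $\mathcal{NP}$, so validity is in $co\mathcal{NP}$.

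\emph{Hardness and the main obstacle.} For the lower bounds I would reduce from \L{}ukasiewicz logic, whose validity is $co\mathcal{NP}$-complete and whose threshold-satisfiability is $\mathcal{NP}$-complete. Since $v_1$ evaluates exactly as a \L{}ukasiewicz valuation (Remark~\ref{rk:conservative:extension}), mapping a \L{} formula $\psi$ to $\psi^\bullet$ transfers the first-coordinate value unchanged, so an appropriate choice of $v_2$ on the atoms embeds \L{}-(threshold-)satisfiability into $\Luk^2_{(x,y)}$-satisfiability, and dually for validity; the $\weakrightarrow$-logics are handled with $\psi^\circ$ and the $(1,1)$-conservativity. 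Combining the two directions gives $\mathcal{NP}$-completeness of satisfiability and $co\mathcal{NP}$-completeness of validity. I expect the genuine difficulty to lie on two fronts: establishing the polynomial branch bound cleanly (verifying that no rule re-expands an already decomposed formula and that auxiliary variables do not proliferate across branchings), and making the hardness reduction uniform in $(x,y)$ — the coupling between coordinates (for instance the forced value $v_2(\mathbf{0})=1$) means the side condition $v_2(\phi)\leq y$ must be arranged so as not to spoil the first-coordinate reduction, which requires a small dedicated construction rather than a bare fragment argument.
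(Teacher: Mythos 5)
Your membership argument is sound and essentially coincides with the paper's: nondeterministically guess a branch, translate it via $\tau$ into a system of linear constraints over the atomic variables and the auxiliary $j$'s, and check feasibility; the paper phrases the per-branch problem as a bounded mixed-integer program (invoking its $\mathcal{NP}$-completeness), whereas you observe that after the branching has resolved all case distinctions the system is a pure rational linear program, feasible-in-polynomial-time — a harmless, indeed slightly cleaner, variant. Your complementation step for the $co\mathcal{NP}$ bound on validity likewise matches the paper's use of the two starting tableaux $\{\phi\leqslant_1 c,\ c<x\}$ and $\{\phi\geqslant_2 d,\ d>y\}$.

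The genuine gap is in the hardness direction. Your plan — inherit hardness from \L{} through the embedding $\psi\mapsto\psi^\bullet$ of Remark~\ref{rk:conservative:extension}, then "arrange $v_2$ so as not to spoil the reduction" — cannot be completed as a fragment argument plus a small correction, because the fragment transfer of \L-validity is false for general $(x,y)$: Remark~\ref{rk:conservative:extension} covers only the filters $(1,0)^\uparrow$ and (for $\weakrightarrow$) $(1,1)^\uparrow$, and for instance $(p\vee{\sim}p)^\bullet$ is $\Luk^2_{(1/2,1/2)}(\rightarrow)$-valid although $p\vee{\sim}p$ is not \L-valid; Propositions~\ref{smallsquares} and~\ref{bigsquares} show the valid sets genuinely vary with the filter. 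So the difficulty you flag in your last sentence is not a finishing touch but the heart of the proof, and it is left unresolved. The paper's way around it is to reduce from \emph{classical} logic via Mundici's construction: $\phi$ is classically valid iff $\phi^{\mathsf{C}}\coloneqq\mathsf{two}(\phi)\supset\phi$ is \L-valid, and — crucially — when $\phi^{\mathsf{C}}$ is not \L-valid some valuation gives it value exactly $0$, so that $(\phi^{\mathsf{C}})^\bullet$ evaluates to $(0,1)$ and $(\phi^{\mathsf{C}})^\circ$ to some $(0,x)$. Since $(0,1)$ belongs to no non-trivial filter, and no filter contains both $(1,1)$ and a point of the form $(0,x)$, classical validity of $\phi$ coincides with $\Luk^2_{(x,y)}$-validity of the translated formula for \emph{every} admissible filter simultaneously. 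This "counter-valuations land at the bottom of the bilattice" property is precisely what makes the reduction uniform in $(x,y)$, and it is the idea missing from your proposal.
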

\begin{proof}
Let $|\phi|$ be the number of symbols in $\phi$. Observe, from the proof of Theo\-rem~\ref{tableauxcompleteness}, that each tableau branch gives rise to two bounded mixed-integer programming problems (bMIP) --- each of the length $O(\rho(|\phi|))$ for some polynomial $\rho$. Recall that bMIP is $\mathcal{NP}$-complete (cf.~\cite{Haehnle1994}). Thus we can non-deterministically guess an open branch and then solve its two bMIPs (one arising from inequalities with $\leqslant_1$, and the other from those with $\leqslant_2$). 
This yields the $\mathcal{NP}$- and $co\mathcal{NP}$-membership for satisfiability and validity, respectively.

To obtain the $\mathcal{NP}$-hardness, we use the same method as in~\cite{Mundici1987,Haehnle1994}. For each classical formula $\phi$ one can construct a formula $\mathsf{two}(\phi)$ (cf. the detailed definition in~\cite[Lemmas~3.1--3.3]{Mundici1987}). Then by~\cite[Lemma~3.2]{Mundici1987}, $\phi$ is classically valid iff $\phi^{\mathsf{C}}\coloneqq\mathsf{two}(\phi)\supset\phi$ is \L-valid. 
Furthermore, if $\phi^{\mathsf{C}}$ is not valid, there is an \L-valuation $v$ such that $v(\phi^{\mathsf{C}})=0$. 
Recall that $\phi^\bullet$ (resp.\ $\phi^\circ$) denotes the formula obtained by substituting $\supset$ for $\rightarrow$ (resp.\ $\weakrightarrow$) in the formula $\phi$ (cf.\ Remark \ref{rk:conservative:extension}).
Thus, $\phi$ is classically valid iff $(\phi^{\mathsf{C}})^\bullet$ is $\Luk^2_{(1,0)}(\rightarrow)$-valid and $(\phi^{\mathsf{C}})^\circ$ is $\Luk^2_{(1,1)}(\weakrightarrow)$-valid. 
Furthermore, if $(\phi^{\mathsf{C}})^\bullet$ is not valid, there is $v'$ such that $v'((\phi^{\mathsf{C}})^\bullet)=(0,1)$, and $(\phi^{\mathsf{C}})^\circ$ is not valid, there is $v'$ such that  $v'((\phi^{\mathsf{C}})^\bullet)=(0,x)$ for some $x$. Since $(0,1)$ is not included in any non-trivial filter on $[0,1]\odot[0,1]$, and since no non-trivial filter can include $(1,1)$ and some $(0,x)$ simultaneously, we obtain
$$\vDash_{CPL}\phi\text{ iff }(\phi^{\mathsf{C}})^\bullet\text{ is }\Luk^2_{(x,y)}(\rightarrow)\text{-valid}\text{ iff }(\phi^{\mathsf{C}})^\circ\text{ is }\Luk^2_{(x,y)}(\weakrightarrow)\text{-valid}$$
as desired.
\hfill $\Box$ 
\end{proof}
\begin{remark}[Removing the branching]
We have introduced branching rules in our tableaux in order to make them more intuitive. It is possible, however, to make all rules \emph{linear} just as it was done originally in~\cite{Haehnle1994}. For example, the linear versions of $\rightarrow\leqslant_1$ and $\rightarrow\geqslant_2$ look as follows ($y\in\{0,1\}$):
\[\dfrac{\phi_1\rightarrow\phi_2\leqslant_1i}{\begin{matrix}\phi_1\geqslant_11-i+j-y&\quad y\leqslant i\\\phi_2\leqslant_1j+y&\quad j\leqslant i\end{matrix}}\quad\qquad
\dfrac{\phi_1\rightarrow\phi_2\geqslant_2i}{\begin{matrix}\phi_1\leqslant_2j+y&\quad y\leqslant1-i\\\phi_2\geqslant_2i+j-y&\quad j\leqslant1-i\end{matrix}}\]
Other rules can be easily acquired since $\vee$ and $\wedge$ can be defined via $\rightarrow$ and $\mathbf{0}$ in the language of $\Luk^2$. Rules without branching improve efficiency of the proof search by
removing the need to guess the branch whose bMIP we should solve.
\end{remark}
\begin{corollary}\label{Gtableauxcomplexity}
Satisfiability for $\mathsf{G}^2(\rightarrow)$ and $\mathsf{G}^2(\weakrightarrow)$ is $\mathcal{NP}$-complete.
\end{corollary}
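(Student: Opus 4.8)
The plan is to establish $\mathcal{NP}$-completeness for the satisfiability problems of $\mathsf{G}^2(\rightarrow)$ and $\mathsf{G}^2(\weakrightarrow)$ by arguing membership and hardness separately, closely following the template of Corollary~\ref{xytableauxcomplexity}. For \textbf{$\mathcal{NP}$-membership}, I would appeal to the completeness result (Theorem~\ref{Gconstraintcompleteness}) and analyse the structure of the tableau $\mathcal{T}(\mathsf{G}^2)$. The key observation is that each rule decomposes a formula into strictly smaller subformulas, so any branch has polynomial length in $|\phi|$ and mentions only polynomially many distinct formulaic and labelled constraints. Thus one can nondeterministically guess a complete open branch $\mathcal{B}$. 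What remains is to verify that $\mathcal{B}$ is open in polynomial time: this reduces to checking that the transitive closure of $\mathcal{B}$ under $\lesssim$ contains no cycle $\mathfrak{X}<\mathfrak{X}$ and no constraint of the form ${0}\geqslant{1}$, $\mathfrak{X}>{1}$, or $\mathfrak{X}<{0}$. Since the constraints are purely order-theoretic (inequalities between finitely many symbolic values), this is a graph-reachability/cycle-detection problem solvable in polynomial time, unlike the \L{}ukasiewicz case which needed bounded mixed-integer programming. Hence satisfiability is in $\mathcal{NP}$.

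For \textbf{$\mathcal{NP}$-hardness}, I would exploit the conservative-extension facts from Remark~\ref{rk:conservative:extension}, which state that $\mathsf{G}^2_{(1,1)}(\rightarrow)$ and $\mathsf{G}^2_{(1,1)}(\weakrightarrow)$ are conservative extensions of G\"{o}del logic $\mathsf{G}$ on the first coordinate: a formula $\phi$ is $\mathsf{G}$-valid iff $\phi^\bullet$ (resp.\ $\phi^\circ$) is valid in the corresponding two-dimensional logic. Since the first coordinate $v_1$ behaves exactly like a standard G\"{o}del valuation, satisfiability in $\mathsf{G}^2$ encodes satisfiability in $\mathsf{G}$, and the latter is already known to be $\mathcal{NP}$-hard (indeed $\mathcal{NP}$-complete, cf.\ the decision problem for G\"{o}del logic). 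A clean route is to reduce classical propositional satisfiability directly: given a classical formula, interpret its variables as taking crisp values in the first coordinate and observe that the $\mathsf{G}^2$ semantics restricted to first coordinates collapses to G\"{o}del satisfiability, which in turn subsumes classical two-valued satisfiability.

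Combining membership and hardness gives $\mathcal{NP}$-completeness for both $\mathsf{G}^2(\rightarrow)$ and $\mathsf{G}^2(\weakrightarrow)$. I expect the \textbf{main obstacle} to lie in the membership direction, specifically in verifying that the openness test for a branch is genuinely polynomial. One must confirm that computing the transitive closure under $\lesssim$ and detecting the forbidden cyclic and numerical patterns does not blow up; because the rules can introduce intermediate formulaic constraints relating subformulas across both coordinates, I would need to argue carefully that the number of distinct constraints on any branch stays polynomially bounded and that the closure conditions remain decidable by straightforward cycle detection. The hardness direction should be routine given the conservativity statement of Remark~\ref{rk:conservative:extension}, so the bulk of the real work is in pinning down the polynomial complexity of branch verification in the purely order-based G\"{o}del setting.
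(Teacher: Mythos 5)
Your proposal is correct and, in its overall architecture, matches the paper: $\mathcal{NP}$-hardness is obtained exactly as the paper does it, from conservativity over G\"{o}del logic (Remark~\ref{rk:conservative:extension}) together with the known $\mathcal{NP}$-completeness of G\"{o}del satisfiability. The one genuine divergence is the membership certificate. The paper's proof is shorter and valuation-based: it reads off from the proof of Theorem~\ref{Gconstraintcompleteness} that any satisfiable formula has a satisfying valuation of the special form~\eqref{valuationchain}, whose values are multiples of $\frac{1}{2n}$; one guesses this polynomial-size valuation directly and evaluates the formula in polynomial time (a small-model argument). You instead guess a complete open branch of $\mathcal{T}(\mathsf{G}^2)$ and verify openness by transitive closure and cycle detection, mirroring the \L{}ukasiewicz argument of Corollary~\ref{xytableauxcomplexity} with the bMIP step replaced by a polynomial order-consistency check --- a contrast you rightly emphasize. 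Your route is sound: since every constraint on a branch relates labelled subformulas of $\phi$ and the constants, the universe of possible constraints is $O(|\phi|^2)$, so complete branches are polynomial-size, and both saturation and the closure conditions are checkable in polynomial time; note that the branch-to-satisfiability direction is exactly what the completeness proof provides, and the satisfiability-to-open-branch direction is soundness, so nothing extra is needed beyond Theorem~\ref{Gconstraintcompleteness} (which the paper's route also uses implicitly to know a valuation of form~\eqref{valuationchain} exists). The net effect is that your verification step carries the bookkeeping explicitly, while the paper's valuation certificate makes the polynomial-time check immediate. One small caution on your closing remark: the step from classical satisfiability to G\"{o}del satisfiability is not just ``restrict to crisp values,'' since a G\"{o}del-satisfying valuation need not be crisp; the standard reduction forces crispness by conjoining $p\vee{\sim}p$ for each variable. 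This does not affect your proof, whose primary hardness argument --- citing $\mathcal{NP}$-hardness of G\"{o}del satisfiability itself --- is the same as the paper's.
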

\begin{proof}
It follows from the proof of theorem~\ref{Gconstraintcompleteness} that the satisfiability of $\mathsf{G}^2(\rightarrow)$ and $\mathsf{G}^2(\weakrightarrow)$ is in $\mathcal{NP}$: we obtain the valuation from~\eqref{valuationchain}, and it takes polynomial time to check that it indeed satisfies the formula.

The $\mathcal{NP}$-hardness follows since $\mathsf{G}^2$'s are conservative extensions of $\mathsf{G}$ whose satisfiability and validity are $\mathcal{NP}$- and $co\mathcal{NP}$-complete respectively. 
\hfill $\Box$ \end{proof}
We can also use the tableaux to check whether a set $\Gamma$ of assumptions entails a formula $\phi$ in the logics we consider. This yields the finite strong completeness for $\mathsf{G}^2$'s and $\Luk^2$'s by means of tableaux, and extends the complexity results to the finitary entailment.
\begin{corollary}
Let $\Gamma$ be a finite set of formulas. Then $\Gamma\vDash_{\Luk^2_{(x,y)}}\phi$ iff the left tableau closes, $\Gamma\vDash_{\mathsf{G}^2(\rightarrow)}\phi$ iff the central tableau closes, and $\Gamma\vDash_{\mathsf{G}^2(\weakrightarrow)}\phi$ iff the left tableau closes.
\begin{center}
\begin{forest}
smullyan tableaux
[{\bigcup\limits_{\phi'\in\Delta}\{\phi'\geqslant_1x,\phi'\leqslant_2y\}}[\phi\geqslant_1c[c<x]][\phi\geqslant_2d[d>y]]]
\end{forest}
\begin{forest}
smullyan tableaux
[{\{1:\phi'\geqslant{1},\mathfrak{\phi}'\leqslant{0}\mid\phi'\in\Gamma\}}[1:\phi<{1}][2:\phi>{0}]]
\end{forest}
\begin{forest}
smullyan tableaux
[{\{1:\phi'\geqslant{1}\mid\phi'\in\Gamma\}}[1:\phi<{1}]]
\end{forest}
\end{center}
Thus, the finitary entailment for any of these logics is $co\mathcal{NP}$-complete. 
\end{corollary}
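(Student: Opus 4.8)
The plan is to reduce each of the three entailments to the (un)satisfiability of a finite family of constraint systems, and then to invoke the soundness and completeness already established for validity (Theorems~\ref{tableauxcompleteness} and~\ref{Gconstraintcompleteness}), after observing that the internal content of those proofs — closed branches are unsatisfiable, and complete open branches yield a satisfying valuation read off the atomic constraints — holds verbatim for \emph{any} initial branch, not only for the negated-goal singletons used there. Unfolding Definition~\ref{def:rectangle:validity}, $\Gamma\vDash\phi$ fails exactly when some valuation $v$ makes every premise designated while $\phi$ is not designated; the root of each displayed tableau encodes ``$v[\Gamma]\subseteq(x,y)^\uparrow$'', and the branchings below it enumerate the ways in which $v(\phi)\geq(x,y)$ can fail.

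For the left tableau ($\Luk^2_{(x,y)}$) I would note that designation of a premise $\phi'$ is captured by the pair $\phi'\geqslant_1 x$ and $\phi'\leqslant_2 y$, so the root asserts $v[\Gamma]\subseteq(x,y)^\uparrow$. Non-designation of $\phi$ means $v_1(\phi)<x$ \emph{or} $v_2(\phi)>y$, which is precisely what the two successors $\phi\leqslant_1 c$ (with $c<x$) and $\phi\geqslant_2 d$ (with $d>y$) describe, in parallel with the validity convention of Definition~\ref{constrainttableau}. Applying Theorem~\ref{tableauxcompleteness} to each branch — whose initial segment now additionally contains the premise constraints — a branch closes iff its associated system of linear inequalities is unsatisfiable, i.e.\ iff no valuation realises both ``premises designated'' and the corresponding disjunct. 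A countermodel to the entailment is a countermodel of one of these two types and conversely, so the whole tableau closes iff \emph{neither} failure mode admits a countermodel, which is exactly $\Gamma\vDash_{\Luk^2_{(x,y)}}\phi$.

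The G\"odel cases follow the same template but lean on Corollary~\ref{cor:not1=not0}. For the central tableau ($\mathsf{G}^2(\rightarrow)$ with the filter $(1,0)^\uparrow$) designation means value exactly $(1,0)$, so the root records $1:\phi'\geqslant{1}$ and $2:\phi'\leqslant{0}$ for each $\phi'\in\Gamma$, while $\phi$ fails to be designated iff $v_1(\phi)<1$ or $v_2(\phi)>0$, giving the two successors $1:\phi<{1}$ and $2:\phi>{0}$. For the right tableau ($\mathsf{G}^2(\weakrightarrow)$), Proposition~\ref{prop:noweakconstants} restricts us to filters extending $(1,1)^\uparrow$, under which designation is equivalent to $v_1=1$; by Corollary~\ref{cor:not1=not0} the second coordinate is then redundant, so only the first coordinate need be tracked and the single successor $1:\phi<{1}$ suffices. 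In each case Theorem~\ref{Gconstraintcompleteness}, applied to the enriched initial branch, gives that a branch closes iff the relevant chain and numerical constraints are jointly unsatisfiable, and closure of the whole tableau again coincides with the entailment.

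Finally, I would read off $co\mathcal{NP}$-completeness of the finitary entailment just as in Corollaries~\ref{xytableauxcomplexity} and~\ref{Gtableauxcomplexity}. Deciding non-entailment is in $\mathcal{NP}$: guess an open branch and certify satisfiability — by solving the branch's two bMIPs in the \L{} case, or by verifying the ordering underlying the valuation~\eqref{valuationchain} in the G\"odel case — so entailment lies in $co\mathcal{NP}$; and since the instance $\Gamma=\emptyset$ is exactly validity, the known $co\mathcal{NP}$-hardness transfers, yielding completeness. I expect the only genuine subtlety to be the bookkeeping around the strict thresholds $c<x$ and $d>y$ (treated as existentially quantified reals) together with the verification that the completeness construction produces one valuation simultaneously satisfying the premise constraints and the negated-goal constraint; both are routine once one observes that nothing in the proofs of Theorems~\ref{tableauxcompleteness} and~\ref{Gconstraintcompleteness} depends on the initial branch being a singleton.
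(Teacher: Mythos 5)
Your proposal is correct and takes essentially the same route as the paper, which states this corollary without an explicit proof precisely because it is the argument you spell out: initialize the tableaux of Theorems~\ref{tableauxcompleteness} and~\ref{Gconstraintcompleteness} with the premise constraints at the root and the non-designation constraints on the branches, observe that the soundness/completeness arguments never use that the initial branch is a singleton, and transfer $co\mathcal{NP}$-completeness as in Corollaries~\ref{xytableauxcomplexity} and~\ref{Gtableauxcomplexity} via the case $\Gamma=\emptyset$. You also correctly read the left tableau's first branch as $\phi\leqslant_1 c$ with $c<x$ (the printed $\phi\geqslant_1 c$ is a typo, as is $\Delta$ for $\Gamma$), matching Definition~\ref{constrainttableau}.
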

%
\section{Conclusions and further research}
\label{sec:conclusion}

Using constraint tableaux, we have provided a modular treatment of the \L{}u\-ka\-sie\-wicz  and G\"{o}del based two-dimensional logics.
%
Our next steps are: (1) to study the structural proof theory of these logics, and of the two layer logics introduced in \cite{BilkovaFrittellaMajerNazari2020};
(2) to study and compare the logics in terms of consequence relations, to provide a Hilbert style axiomatization (for those where modus ponens is sound), and to prove standard completeness --- cases we understand so far are the following four: $\Luk _{(1,1)}^2(\weakrightarrow)$, $\Luk_{(1,0)}^2(\rightarrow)$ which is the logic $\Luk_{(\neg)}$ of \cite{BilkovaFrittellaMajerNazari2020}, $\mathsf{G}_{(1,0)}^2(\rightarrow)$ whose validities coincide with the axiomatic extension of Wansing's $I_4C_4$ \cite{Wansing2008} with the prelinearity axiom, and $\mathsf{G}_{(1,1)}^2(\weakrightarrow)$ whose consequence coincides with the axiomatic extension of Nelson's $N4^\bot$ \cite{Nelson1949,Odintsov2008} with the prelinearity axiom. 

In a broader sense we naturally aim to provide a general treatment of two-dimensional graded logics. Indeed, within the research project introduced in \cite{BilkovaFrittellaMajerNazari2020}, we want to develop a modular logical framework for reasoning based on heterogeneous information (such as crisp or fuzzy  data, personal beliefs, etc.) that can be both incomplete and inconsistent. In addition, we do not wish to commit to a specific logic to model the reasoning of the agent(s), because different situations may call for different logics --- modeling the reasoning of a~group of experts is different from modeling the reasoning of the crowd. Doing so requires the ability to manipulate and combine logics for these different situations in a modular way.
\bibliographystyle{splncs04}

\section{Appendix}
\label{sec:appendix}



\subsection{Proofs of Section 
\ref{ssec:sem:properties:L2}. Semantical properties of $\Luk^2_{(x,y)}(\rightarrow)$}
\label{appendix:proof:L2}

\subsubsection{Proposition \ref{prop:rectangles:to:squares}}
\begin{itemize}[noitemsep,topsep=2pt]
\item Let $y\geq 1-x$. Then $\phi$ is $\Luk^2_{(x,y)}(\rightarrow)$-valid iff $\phi$ is $\Luk^2_{(x,1-x)}(\rightarrow)$-valid.
\item Let $y<1-x$. Then $\phi$ is $\Luk^2_{(x,y)}(\rightarrow)$-valid iff $\phi$ is $\Luk^2_{(1-y,y)}(\rightarrow)$-valid.
\end{itemize}

\begin{proof}
We prove only the first case, as the second one can be tackled in the same manner. Assume that $\phi$ is $\Luk^2_{(x,1-x)}(\rightarrow)$-valid. Since $y\geq 1-x$, $(x,1-x)^\uparrow\subseteq(x,y)^\uparrow$, then $\phi$ is $\Luk^2_{(x,y)}(\rightarrow)$-valid as well.
Now let us show that if $\phi$ is $\Luk^2_{(x,y)}(\rightarrow)$-valid, then $\phi$ is $\Luk^2_{(x,1-x)}(\rightarrow)$-valid as well.

For any valuation $v$, let us define the valuation $v^*$ as follows: $v^*(p)\coloneqq(1-v_2(p),1-v_1(p))$. We now show that the extension of $v^*$ satisfies the following property: for every formula $\phi$, we have $v^*(\phi)=(1-v_2(\phi),1-v_1(\phi))$.

Notice that $v(\phi)=(x',y')\Leftrightarrow v(\neg{\sim}\phi)=(1-y',1-x')$ 
and $v(\psi_1\leftrightarrow\psi_2)=(1,0)\Leftrightarrow v(\psi_1)=v(\psi_2)$. Since $(1,0)\in(x,y)^\uparrow$ for any $x$ and $y$,  a formula  always evaluated at $(1,0)$ is necessarily $\Luk^2_{(x,y)}(\rightarrow)$-valid. Notice that
\[\begin{array}{rcl}
v(\neg{\sim}\neg\psi_1\leftrightarrow\neg\neg{\sim}\psi_1)&=&(1,0),
\\
v(\neg{\sim}(\psi_1\wedge\psi_2)\leftrightarrow(\neg{\sim}\psi_1\wedge\neg{\sim}\psi_2))&=&(1,0),\\
v(\neg{\sim}(\psi_1\vee\phi_2)\leftrightarrow(\neg{\sim}\psi_1\vee\neg{\sim}\psi_2))&=&(1,0),\\
v(\neg{\sim}(\psi_1\rightarrow\phi_2)\leftrightarrow(\neg{\sim}\psi_1\rightarrow\neg{\sim}\psi_2))&=&(1,0).
\end{array}\]
Indeed, it follows from the fact that these formulas are derivable in the Hilbert calculus for $\Luk_{(\neg)}$ from~\cite[Lemma~2]{BilkovaFrittellaMajerNazari2020} which is sound and complete w.r.t. $[0,1]_{\Luk}\odot[0,1]_{\Luk}(\rightarrow)_{(1,0)}$. Using the four equalities above, one can prove by induction on $\phi$ that $v^* (\phi) = (1-v_2(\phi), 1-v_1(\phi))$ for any formula $\phi$.
Thus for every valuation $v$ we have defined its counterpart $v^*$ such that $v^*(\phi)$ is the reflection of $v(\phi)$ along the vertical axis of the lattice. 

Let $\phi$ be $\Luk^2_{(x,y)}(\rightarrow)$-valid but not $\Luk^2_{(x,1-x)}(\rightarrow)$-valid. Then, there is a $v$ s.t.\ $v(\phi)\in(x,y)^\uparrow$ but $v(\phi)\notin(x,1-x)^\uparrow$. But then $v^*(\phi)\notin(x,y)^\uparrow$ since only $(x,1-x)^\uparrow$'s are closed under conflation. Contradiction.
\hfill $\Box$ \end{proof}

\subsubsection{Proposition \ref{smallsquares}}
Let $m,n\in\{2,3,\ldots\}$. Then, $\Luk^2_{\left(\frac{m-1}{m},\frac{1}{m}\right)}\subsetneq\Luk^2_{\left(\frac{n-1}{n},\frac{1}{n}\right)}$ iff $m>n$.

\begin{proof}
The inclusion follows immediately from the fact that $$\left(\dfrac{m-1}{m},\dfrac{1}{m}\right)^\uparrow\subsetneq\left(\dfrac{n-1}{n},\dfrac{1}{n}\right)^\uparrow\text{ iff }m>n$$
To show the strictness of the inclusion, consider the following family of formulas.
\[\mathsf{F}_n\coloneqq\bigvee\limits_{\begin{matrix}1\leq i<j\leq n+1\end{matrix}}\left(p_i\leftrightarrow p_j\right)\]
Notice that $v(\phi_1\leftrightarrow\phi_2)=(1-|v_1(\phi_1)-v_1(\phi_2)|,|v_2(\phi_1)-v_2(\phi_2)|)$. In addition, we have that for $n\geq 2$:
\begin{equation}
\label{eq:Fn:geq}
  v(\mathsf{F}_n)\geq \left(\dfrac{n-1}{n},\dfrac{1}{n}\right)  
\end{equation}
Indeed, we can observe that $v_1(\psi_1\leftrightarrow\psi_2)$ is the \emph{complement to the distance} between $v_1(\psi_1)$ and $v_1(\psi_2)$ on $[0,1]$, while $v_2(\psi_1\leftrightarrow\psi_2)$ is the distance between $v_2(\psi_1)$ and $v_2(\psi_2)$. Thus, $v_1(\mathsf{F}_n)$ is the \emph{maximal complement to the distance between any two points out of $n$ on} $[0,1]$, while $v_2(\mathsf{F}_n)$ is the \emph{minimal such distance}. The lower bound on $v(\mathsf{F}_n)$ is produced when we place points on $[0,1]$ with \emph{equal intervals} between them.
Hence, in each $\Luk^2_{\left(\frac{k-1}{k},\frac{1}{k}\right)}(\rightarrow)$, only $\mathsf{F}_n$'s with $n\geq  k$
are valid. Furthermore, none of the $\mathsf{F}_n$'s are valid in $\Luk_{(1,0)}$. This gives us the desired strictness of the inclusion.
\hfill $\Box$ \end{proof}
\subsubsection{Proposition \ref{bigsquares}}
Let $m,n\in\{3,4,\ldots\}$. Then $\Luk^2_{\left(\frac{m-2}{2m},\frac{m+2}{2m}\right)}\!\subsetneq\!\Luk^2_{\left(\frac{n-2}{2n},\frac{n+2}{2n}\right)}$ iff $m\!>\!n$.

\begin{proof}
As in Proposition~\ref{smallsquares}, inclusion follows from the fact that
$$\left(\frac{m-2}{2m},\frac{m+2}{2m}\right)^\uparrow\subsetneq\left(\frac{n-2}{2n},\frac{n+2}{2n}\right)^\uparrow\text{ iff }m>n$$

Again, the non-trivial part is the strictness. Recall that $\psi_1\odot\psi_2={\sim}(\psi_1\rightarrow{\sim}\psi_2)$.
Now, consider the family of formulas $\mathsf{F}_2\odot\mathsf{F}_n$ for $n\geq 3$. 
We have that 
\begin{align*}
v(\mathsf{F}_2\odot\mathsf{F}_n)&=\left(\max\left(0,v_1(\mathsf{F}_2)+v_1(\mathsf{F}_n)-1\right),\min\left(1,v_2(\mathsf{F}_2)+v_2(\mathsf{F}_n)\right)\right)\\
&\geq \left(\max\left(0,\dfrac{1}{2}+\dfrac{n-1}{n}-1\right),\min\left(1,\dfrac{1}{2}+\dfrac{1}{n}\right)\right)
\tag{using \eqref{eq:Fn:geq}}
\\
&\geq \left(\dfrac{n-2}{2n},\dfrac{n+2}{2n}\right).
\end{align*}

As in Proposition~\ref{smallsquares}, we can see that $\mathsf{F}_2\odot\mathsf{F}_k$ is $\Luk_{\left(\frac{n-2}{2n},\frac{n+2}{2n}\right)}$-valid iff $k\geq  n$.
\hfill $\Box$ \end{proof}

\subsubsection{Proposition \ref{nomodusponens}}
Let $\Luk^2_{(1,0)}(\rightarrow)\subsetneq\Luk^2_{(x,y)}(\rightarrow)$. Then $\Luk^2_{(x,y)}(\rightarrow)$ is not closed under modus ponens.
\begin{proof}
Observe that by proposition~\ref{prop:rectangles:to:squares}, $\Luk^2_{(1,0)}(\rightarrow)\subsetneq\Luk^2_{(x,y)}(\rightarrow)$ iff $x<1$ and $y>0$. Furthermore, by proposition~\ref{prop:rectangles:to:squares}, it suffices to consider only the $\Luk^2_{(x',1-x')}(\rightarrow)$'s. Since for any $\psi$ and $\psi'$, $\psi\rightarrow(\psi'\rightarrow(\psi\odot\psi'))$ is $\Luk^2_{(1,0)}$-valid, it is enough to find $\Luk^2_{(x,1-x)}(\rightarrow)$-valid formulas $\psi$ and $\psi'$ such that $\psi\odot\psi'$ is not $\Luk^2_{(x',1-x')}(\rightarrow)$-valid.

We consider two cases. Either (1) $(x',1-x')^\uparrow$ is not an extension of $\left(\frac{1}{2},\frac{1}{2}\right)^\uparrow$ or (2) it is.
In the first case, we have $\frac{1}{2} < x' \leq 1$. Hence, there is a $k\in\mathbb{N}$ s.t.
$k \geq 3$ and $\frac{k-2}{k-1}<x'\leq\frac{k-1}{k}$. Hence, $\mathsf{F}_{k}$ is $\Luk^2_{(x',1-x')}(\rightarrow)$-valid.
By Proposition \ref{smallsquares}, there is $v$ such that $v(\mathsf{F}_{k-1}) = (\frac{k-2}{k-1},\frac{1}{k-1}) \notin (x',1-x')^\uparrow$.
Hence, $\mathsf{F}_{k-1}$ is not $\Luk^2_{(x',1-x')}(\rightarrow)$-valid. 
Observe that in such case $\mathsf{F}_{k}\odot\mathsf{F}_{k}$ is not $\Luk^2_{(x',1-x')}(\rightarrow)$-valid. Indeed, consider the valuation $v'$ such that $v'(p_1)=(1,0)$, $v'(p_{k+1})=(0,1)$, and $v'(p_i)=\frac{i-1}{k}$ for every $2\leq i\leq k$. We get that
\begin{align*}
  v'(\mathsf{F}_k\odot\mathsf{F}_k)=\left(\frac{k-2}{k},\frac{2}{k}\right) < (x', 1-x').  
  \tag{because $\frac{k-2}{k-1}< x'$}
\end{align*}
Hence, $\mathsf{F}_k\odot\mathsf{F}_k$ is not $\Luk^2_{(x',1-x')}(\rightarrow)$-valid as desired.


For the second case, we have $0 \leq x \leq \frac{1}{2}$.
Note  that $v'(\mathsf{F}_2\odot\mathsf{F}_2)=(0,1)$ and thus not valid in any $\L^2_{(x,y)}$. This proves the second case as $\mathsf{F}_2$ is $\Luk^2_{(x,y)}$-valid iff $(x,y)^\uparrow\supseteq(\frac{1}{2},\frac{1}{2})^\uparrow$.
\hfill $\Box$ \end{proof}

\subsection{Proofs of Section \ref{ssec:sem:properties:G2}. Semantical properties of $\mathsf{G}^2(\rightarrow)$}
\label{appendix:proof:G2}

\subsubsection{Proposition \ref{no1no0}}
Let $\phi$ be a formula over $\{\mathbf{0},\mathbf{1},\neg,\wedge,\vee,\rightarrow,\Yleft\}$. For any $v(p)=(x,y)$, let $v^*(p)=(1-y,1-x)$. Then $v(\phi)=(x,y)$ iff $v^*(\phi)=(1-y,1-x)$.

\begin{proof}
Let $\chi$ and $\chi'$ be formulas. Let us show that,
for all $\mathbf{x},\mathbf{x'},\mathbf{y},\mathbf{y'}\in\{1,2\}$ such that $\mathbf{x}\neq\mathbf{y}$ and $\mathbf{x'}\neq\mathbf{y'}$, we have
\begin{equation}\tag{$*$}\label{dualorder}
v_\mathbf{x}(\chi)\geq  v_\mathbf{x'}(\chi')\Leftrightarrow v^*_\mathbf{y}(\chi')\geq  v^*_\mathbf{y'}(\chi)
\end{equation}
We proceed by the induction on the number of unary and binary connectives in both $\chi$ and $\chi'$.
The only non-trivial case is that of $\rightarrow$. For $\rightarrow$, we have
\begin{align*}
v_1(\psi_1\rightarrow\psi_2)<v_\mathbf{x}(\chi) \quad
\Leftrightarrow\quad&v_1(\psi_1)>v_1(\psi_2) \text{ and } v_1(\psi_2)<v_\mathbf{x}(\chi)
\\
\Leftrightarrow\quad&v^*_2(\psi_1)<v^*_2(\psi_2)
 \text{ and }
v^*_2(\psi_2)>v^*_\mathbf{y}(\chi)\tag{by IH}\\
\Leftrightarrow\quad&v^*_2(\psi_1\rightarrow\psi_2)>v^*_\mathbf{y}(\chi)
\end{align*}
and
\begin{align*}
& v_1(\psi_1\!\rightarrow\!\psi_2)\!=\!v_\mathbf{x}(\chi)
\\
\Leftrightarrow \quad & v_1(\psi_1)\!>\!v_1(\psi_2)\!=\!v_\mathbf{x}(\chi)\text{ or } (\,  v_1(\psi_1)\!\leq\!v_1(\psi_2) \text{ and } v_\mathbf{x}(\chi)\!=\!v_1(\mathbf{1}) \, )
\\
\Leftrightarrow \quad&
v^*_2(\psi_1)\!<\!v^*_2(\psi_2)\!=\!v^*_\mathbf{y}(\chi)\text{ or } (\, v^*_2(\psi_1)\!\geq \!v^*_2(\psi_2) \text{ and } v^*_\mathbf{y}(\chi)\!=\!v^*_2(\mathbf{1}) \,)
\tag{by IH}
\\
\Leftrightarrow \quad
&v^*_2(\psi_1\rightarrow\psi_2)=v^*_\mathbf{y}(\chi)
\end{align*}
\begin{comment}
\blue{
and also
\begin{align*}
v_2(\psi_1\rightarrow\psi_2)\leq v_\mathbf{x}(\chi)&\Leftrightarrow v_2(\psi_2)\leq v_2(\psi_1)\text{ or }v_2(\psi_2)\leq v_\mathbf{x}(\chi)\\
&\Leftrightarrow v_1(\psi_2)\geq  v_1(\psi_1)\text{ or }v_1(\psi_2)\geq  v_\mathbf{y}(\chi)\tag{by IH}\\
&\Leftrightarrow v_1(\psi_1\rightarrow\psi_2)\geq  v_\mathbf{y}(\chi)
\end{align*}
}
\end{comment}

Now we can prove the statement by  induction on $\phi$. The basis cases of variables and constants hold by the construction of $v^*$.
\begin{comment}
\textcolor{blue}{For $\neg$, we have
\begin{align*}
v(\neg\psi)=(x,y)&\Leftrightarrow v(\psi)=(y,x)\\
&\Leftrightarrow v^*(\psi)=(1-x,1-y)\tag{by IH}\\
&\Leftrightarrow v^*(\neg\psi)=(1-y,1-x)
\end{align*}
}

\textcolor{blue}{For $\wedge$, we assume w.l.o.g. that $v_1(\psi_1)\leq v_1(\psi_2)$ and $v_2(\psi_1)\leq v_2(\psi_2)$. Furthermore, we let $v(\psi_1)=(x_1,y_1)$ and $v(\psi_2)=(x_2,y_2)$ we have
\begin{align*}
v(\psi_1\wedge\psi_2)=(x_1,y_2)&\Leftrightarrow v(\psi_1)=(x_1,y_1)\text{ and }v(\psi_2)=(x_2,y_2)\tag{by ass.}\\
&\Leftrightarrow v^*_1(\psi_1)=(1-y_1,1-x_1),v^*_2(\psi_2)=(1-y_2,1-x_2)\tag{by IH}\\
&\Leftrightarrow v^*(\psi_1\wedge\psi_2)=(1-y_2,1-x_1)\tag{by ass.}
\end{align*}
}
\end{comment}
We only present the case of $\rightarrow$.
We consider two cases: $(x,y)\neq(1,0)$ and $(x,y)=(1,0)$.

In the first and in the second cases, we have
\begin{align*}
&\quad \ \ v(\psi_1\rightarrow\psi_2)=(x,y) \\ 
& \Leftrightarrow  v_1(\psi_1)>v_1(\psi_2)=x\text{ and }v_2(\psi_1)<v_2(\psi_2)=y\\
& \Leftrightarrow  v^*_2(\psi_1)<v^*_2(\psi_2)=1-x\text{ and }v^*_1(\psi_1)>v^*_1(\psi_2)=1-y \tag{by IH and~\eqref{dualorder}}\\
& \Leftrightarrow  v^*(\psi_1\rightarrow\psi_2)=(1-y,1-x)
\end{align*}
and
\begin{align*}
v(\psi_1\rightarrow\psi_2)=(1,0)&\Leftrightarrow v_1(\psi_1)\leq v_1(\psi_2)\text{ and }v_2(\psi_1)\geq  v_2(\psi_2)\\
&\Leftrightarrow v^*_2(\psi_1)\geq  v^*_2(\psi_2)\text{ and }v^*_1(\psi_1)\leq v^*_1(\psi_2)
\tag{by IH and~\eqref{dualorder}}\\
&\Leftrightarrow v^*(\psi_1\rightarrow\psi_2)=(1,0).
\end{align*}
\hfill $\Box$ \end{proof}
In order to prove Proposition \ref{prop:no:lower:limits:for:G:neg}, we need the following claims.

\paragraph{Claim 1. (NNF)}
\label{lem:nnf}
Each formula over $\{\mathbf{0},\mathbf{1},\neg,\wedge,\vee,\rightarrow,\Yleft\}$ is equivalent to a formula in a $\neg$-negation normal form (NNF) in $\mathsf{G}^2(\rightarrow)$.

\begin{proof}
We create the NNF by repeating applications of instances of the following formulas, which are always designated (as they are always evaluated at $(1,0)$). 
\begin{align*}
\neg\neg\phi &\leftrightarrow \phi & \neg(\phi\wedge\psi) &\leftrightarrow (\neg\phi\vee\neg\psi)\\
\neg \mathbf{0} &\leftrightarrow \mathbf{1} & \neg(\phi\vee\psi) &\leftrightarrow (\neg\phi\wedge\neg\psi)\\
\neg \mathbf{1} &\leftrightarrow \mathbf{0} & \neg(\phi\rightarrow\psi) &\leftrightarrow (\neg\psi\Yleft\neg\phi)\\
 & & \neg(\phi\Yleft\psi) &\leftrightarrow (\neg\psi\rightarrow\neg\phi)
\end{align*}
A similar claim was shown in \cite{BilkovaFrittellaMajerNazari2020} for the logic $\Luk_{(\neg)}$ which coincides with $\Luk^2_{(1,0)}(\rightarrow)$, and can be shown for all the logics we consider in this paper, using the corresponding implication and varying the way we negate it (the one but last formula on the right). For logics with $ \weakrightarrow$ we only get weakly equivalent negation normal form.
\hfill $\Box$
\end{proof}

\paragraph{Claim 2.} For every G\"odel formula $\phi$ over $\{\mathbf{0},\mathbf{1},\wedge,\vee,\rightarrow,\Yleft\}$, for every valuation $v$ such that $v(\phi) < 1$ and for every $0 < x \leq 1 $, we have that
\begin{enumerate}[noitemsep,topsep=2pt]
    \item $v(\phi) \leq \max \{ v(p) \mid p \in \mathsf{Var}(\phi) \}$ and
    \item there exists a valuation $v'$ such that $v'(\phi) \leq x $.
\end{enumerate}

\begin{proof}
1) We proceed by induction. The only non-trivial parts are $\phi=\phi_1\rightarrow\phi_2$ and $\phi=\phi_1\Yleft\phi_2$.

If $v(\phi_1\rightarrow\phi_2)<1$, then $v(\phi_2)<v(\phi_1)$ and $v(\phi_1\rightarrow\phi_2)=v(\phi_2) < 1$.
Let $q \in \mathsf{Var}(\phi)$ be such that $v(q)=\max\{v(p)\mid p\in\mathsf{Var}(\phi)\}$. We consider two cases: either $q\in\mathsf{Var}(\phi_1)$ or $q\in\mathsf{Var}(\phi_2)$. 
In both cases we obtain the desired result by the induction hypothesis.

If $v(\phi_1\Yleft\phi_2)<1$, then $v(\phi_1)<1$ and $v(\phi_1\Yleft\phi_2)=v(\phi_1)$ or $v(\phi_1\Yleft\phi_2)=0$ and $v(\phi_1)\leq v(\phi_2)$. 
Let $q \in \mathsf{Var}(\phi)$ be such that  $v(q)=\max\{v(p)\mid p\in\mathsf{Var}(\phi)\}$. 
We consider two cases: either $q\in\mathsf{Var}(\phi_1)$ or $q\in\mathsf{Var}(\phi_2)$. In both cases we obtain the desired result by the induction hypothesis.

2) Let $v(\phi)<1$, $n > 0$ and $\frac{1}{n} \leq x$. 
We construct $v'(\phi)$ as follows: $v'(p)=\frac{v(p)}{n}$. We can prove by induction on $\phi$ that $v'(\phi) < 1$. Hence, by the previous item, we have that 
$$v'(\phi) \leq \max \{ v'(p) \mid p \in \mathsf{Var}(\phi) \} = \frac{1}{n} \max \{ v(p) \mid p \in \mathsf{Var}(\phi) \} \leq x$$ 
as required.
\hfill $\Box$ \end{proof}

\paragraph{Claim 3.}
Let $\phi$ be a formula over $\{\mathbf{0},\mathbf{1},\neg,\wedge,\vee,\rightarrow,\Yleft\}$, for every valuation $v$ such that $v_1(\phi) \neq 1$ and for every $0 < x \leq 1 $, we have that
\begin{enumerate}[noitemsep,topsep=2pt]
    \item $v_1(\phi) \leq \max \left( \{v_1(p)\mid p\in\mathsf{Var}(\phi)\} \cup \{v_2(p)\mid p\in\mathsf{Var}(\phi)\} \right)$ and
    \item there exists a valuation $v'$ such that $v_1'(\phi) \leq x $.
\end{enumerate}

\begin{proof}
W.l.o.g.\ we can assume that $\phi$ is in NNF. 

1) Let $v$ be a valuation such that $v_1(\phi) \neq 1$.
$\phi$ being a formula in NNF in the language $\{\mathbf{0},\mathbf{1},\neg,\wedge,\vee,\rightarrow,\Yleft\}$ over the set $\mathsf{Prop}$, it can be perceived as a formula $\phi'$ in the language  $\{\mathbf{0},\mathbf{1},\wedge,\vee,\rightarrow,\Yleft\}$ over the set of literals $\mathsf{Lit}$ in place of propositional atoms. 
Let $w$ be the new valuation over the set of literals defined as follows: $w(p) = v_1(p)$ and $w(\neg p)=v_1(\neg p)=v_2(p)$.
By applying Claim 2 to $\phi'$ and $w$, we get that 
\begin{align*}
    w(\phi') & \leq \max \{ w(l) \mid l \in \mathsf{Lit}(\phi') \}\\ 
    & = \max \left( \{v_1(p)\mid p\in\mathsf{Var}(\phi)\} \cup \{v_2(p)\mid p\in\mathsf{Var}(\phi)\} \right)
\end{align*}
as required.

2) By Claim 2, there exists $w'$ on the set of literals such that $w'(\phi') \leq x$. Let $v'$ be defined as follows:
$v'(p) = (w(p), w(\neg p))$ for every $p \in \mathsf{Prop}$.
We get that $v'(\phi) \leq x$
 as required.
\hfill $\Box$  \end{proof}

\subsubsection{Proposition \ref{prop:no:lower:limits:for:G:neg}}
Let $\phi$ be a formula over $\{\mathbf{0},\mathbf{1},\neg,\wedge,\vee,\rightarrow,\Yleft\}$  such that $v(\phi)\geq (x,y)$ for any $v$ and some fixed $(x,y)\neq(0,1)$. Then $v'(\phi)=(1,0)$ for any $v'$.

\begin{proof}
Assume that $\phi$ is $\mathsf{G}^2_{(x,y)}$-valid and w.l.o.g. that $\phi$ is in NNF.
Observe that $(x,x)$ points are not affected by $\neg$. 
Now recall that by proposition~\ref{no1no0}, if $v(\phi)\neq(1,0)$, there is a $v'$ such that $v'_1(\phi)\neq1$. 
Furthermore, notice that if $\{(0,0),(1,1)\}\subseteq(x,y)^\uparrow$ then $(x,y)^\uparrow=(0,1)$. 
Hence, we have that $$\{(0,0),(1,1)\}\not\subseteq(x,y)^\uparrow,$$ which implies that
\[\begin{array}{c}
\exists(z,z)\forall(x',y')\in(x,y)^\uparrow:(z,z)\neq(1,1)\text{ and }z\geq  y'\\
\text{or}\\
\exists(z,z)\forall(x',y')\in(x,y)^\uparrow:(z,z)\neq(0,0)\text{ and }z\leq x'.\\
\end{array}\]
By proposition~\ref{no1no0}, 
we know that $v(\phi)=(0,0)$ iff $v^*(\phi)=(1,1)$, hence
we can state w.l.o.g. that $(0,0)\notin(x,y)^\uparrow$ and that $v(\phi) \neq (0,0)$ for every $v$. 
Thus, there is a~$(z,z)$ such that $(z,z)\neq(0,0)$ and for any $(x',y')\in(x,y)^\uparrow$ $z\leq x'$.

Assume, for contradiction, that
$v'(\phi)\neq(1,0)$. There are two cases.

Case 1: $v'_1(\phi)\neq 1$. 
Since $x>0$, by Claim 3, we have that 
there exists a valuation $v'$ such that $v_1'(\phi) \leq \frac{x}{2} < x$. Hence,  $v'(\psi)\notin(x,y)^\uparrow$, which contradicts the fact that $\psi$ is $\mathsf{G}^2_{(x,y)}$-valid.

Case 2: $v'_1(\phi)=1$. Then $v'_2(\phi)\neq 0$ and $v'^*_1(\phi) = 1 - v'_2(\phi) \neq 1$ (see Proposition \ref{no1no0} for definition of $v'^*$). We proceed as in the previous case.

\hfill $\Box$ \end{proof}

\subsubsection{Corollary \ref{cor:not1=not0}}
$v(\phi)=(1,0)$ for any $v$ iff $v'_1(\phi)=1$ for any $v'$.

\begin{proof}
Left to right is obvious. We prove right to left by contraposition.
Assume that $v(\phi)\neq(1,0)$. Then, either $v_1(\phi)\neq1$ or $v_2(\phi)=c\neq0$. In the first case, we get the result. In the second case we obtain by proposition~\ref{no1no0} that $1\neq v^*_1(\phi)=1-c$ as required.
\hfill $\Box$ \end{proof}

\end{document}